
\documentclass[12pt]{amsart}

\voffset=-1.4mm
\oddsidemargin=17pt \evensidemargin=17pt
\headheight=9pt     \topmargin=26pt
\textheight=576pt   \textwidth=440.8pt
\parskip=0pt plus 4pt

\usepackage{amssymb}
\usepackage{bm}
\usepackage{graphicx}
\usepackage{psfrag}
\usepackage{color}
\usepackage{url}

\usepackage{algpseudocode}

\usepackage{mathtools}

\usepackage{xy}
\input xy
\xyoption{all}

\newcommand{\excise}[1]{}

\newtheorem{thm}{Theorem}[section]
\newtheorem{lemma}[thm]{Lemma}

\newtheorem{cor}[thm]{Corollary}
\newtheorem{prop}[thm]{Proposition}

\newtheorem{prob}[thm]{Problem}

\theoremstyle{definition}
\newtheorem{alg}[thm]{Algorithm}
\newtheorem{example}[thm]{Example}
\newtheorem{remark}[thm]{Remark}
\newtheorem{defn}[thm]{Definition}

\numberwithin{equation}{section}



\newcommand{\ring}[1]{\ensuremath{\mathbb{#1}}}

\renewcommand\>{\rangle}
\newcommand\<{\langle}

\newcommand\NN{\ring{N}}

\newcommand\QQ{\ring{Q}}

\newcommand\ZZ{\ring{Z}}

\renewcommand\aa{{\mathbf a}}
\newcommand\bb{{\mathbf b}}
\newcommand\ee{{\mathbf e}}

\newcommand\qq{{\mathsf q}}



\newcommand\iso{\cong}


\renewcommand\implies{\Rightarrow}


 %

\DeclareMathOperator\lcm{lcm} 
\DeclareMathOperator\Ap{Ap} 
\DeclareMathOperator\bul{bul} 




\begin{document}

\mbox{}
\title{On dynamic algorithms for factorization invariants in numerical monoids}
\author{Thomas Barron}
\address{Mathematics Department\\University of Kentucky\\Lexington, KY 40506}
\email{thomas.barron@uky.edu}
\author{Christopher O'Neill}
\address{Mathematics Department\\Texas A\&M University\\College Station, TX 77843}
\email{coneill@math.tamu.edu}
\author{Roberto Pelayo}
\address{Mathematics Department\\University of Hawai`i at Hilo\\Hilo, HI 96720}
\email{robertop@hawaii.edu}

\date{\today}

\begin{abstract}
\hspace{-2.05032pt}
Studying the factorization theory of numerical monoids relies on understanding several important factorization invariants, including length sets, delta sets, and $\omega$-primality.  While progress in this field has been accelerated by the use of computer algebra systems, many existing algorithms are computationally infeasible for numerical monoids with several irreducible elements.  In this paper, we present dynamic algorithms for the factorization set, length set, delta set, and $\omega$-primality in numerical monoids and demonstrate that these algorithms give significant improvements in runtime and memory usage.  In describing our dynamic approach to computing $\omega$-primality, we extend the usual definition of this invariant to the quotient group of the monoid and show that several useful results naturally extend to this broader setting. 
\end{abstract}
\maketitle


\section{Introduction}\label{s:intro}

Numerical monoids (co-finite, additive submonoids of $\NN$) and their non-unique factorization theory have emerged as an active area of study in recent years~\cite{andalg,elastsets,delta,deltaperiodic,omegaquasi,omegamonthly}.  The primary objects in these investigations are factorization invariants, which measure in different ways the plural factorizations of an element $n$ into irreducibles in the numerical monoid $S$.  The set $\mathsf Z(n)$ of factorizations of a single element $n \in S$ can uniquely determine the monoid structure of $S$.  As such, $\mathsf Z(n)$ is often too cumbersome to compute for large values of $n$ in numerical monoids with several irreducible elements.  Most of the major factorization invariants of interest, including the length set $\mathsf L(n)$, delta set $\Delta(n)$, $\omega$-primality $\omega(n)$, and catenary degree $\mathsf c(n)$, are typically only computed after first computing $\mathsf Z(n)$.  

Discovering many of the results regarding these factorization invariants has relied on effectively utilizing computer algebra packages, such as the GAP package \texttt{numericalsgps}~\cite{numericalsgps}.  Of particular interest is finding closed forms for and analyzing the asymptotic behavior of these invariants, which frequently requires computing the invariant for numerous elements of $S$.   As each of these individual computations generally requires first computing $\mathsf Z(n)$ and then passing to the invariant of interest, finding patterns for numerical monoids with large numbers of irreducible elements is often computationally infeasible.

Recent investigations in numerical monoids show that their factorization invariants tend to have eventual quasi-polynomial behavior.  For example, $\omega$-primality~\cite{omegaquasi} and maximal and minimal elements of length sets~\cite{elastsets} have quasilinear behavior, while delta sets~\cite{deltaperiodic} and catenary degree~\cite{catenaryperiodic} have periodic (i.e., quasi-constant) behavior.  The inductive nature of these results motivates the major results for this paper: several factorization invariants, including $\mathsf Z(n), \mathsf L(n), \Delta(n),$ and $\mathsf \omega(n)$, can be computed dynamically.   Moreover, the dynamic algorithms we present for $\mathsf L(n), \Delta(n),$ and $\omega(n)$ (Algorithms~\ref{a:dynamiclengths},~\ref{a:dynamicdelta}, and~\ref{a:dynamicomega}, respectively) do not require computing $\mathsf Z(n)$, which significantly improves runtimes and memory usage.  Additionally, as closed forms and asymptotic behavior is of interest, a dynamic algorithm, which computes several values quickly in succession, is a fruitful approach.
  
In Section~\ref{s:background}, we provide all necessary definitions, including those for numerical monoids (Definition~\ref{d:numericalmonoid}) and their length set and delta set invariants (Definition~\ref{d:deltaset}), and state a recent periodicity result of interest.  In Section~\ref{s:dynamicfactor}, we give a dynamic algorithm to compute the delta set of a numerical monoid (Algorithm~\ref{a:dynamicdelta}), and compare this algorithm to existing algorithms~\cite{compasympdelta}.  In Section~\ref{s:dynamicomega}, we give a dynamic algorithm to compute $\omega$-primality in numerical monoids (Algorithm~\ref{a:dynamicomega}), after developing the necessary theory in Section~\ref{s:omegaext}.  In Section~\ref{s:omegabound}, we prove Theorem~\ref{t:omegafactor}, which relates factorizations and bullets (Definition~\ref{d:bulletext}), and derive an improved bound on start of quasilinear behavior of the $\omega$-primality function (Theorem~\ref{t:newomegabound}).  Finally, Section~\ref{s:futurework} states several open questions.  

At the time of writing, Algorithms~\ref{a:dynamicdelta}, \ref{a:dynamicfactor}, and~\ref{a:dynamicomega} are currently implemented in the GAP package \texttt{numericalsgps}~\cite{numericalsgps}.  All included benchmarks use this software, including comparisons to existing algorithms.  

\subsection*{Acknowledgements}
The authors would like to thank Pedro Garc\'ia-S\'anchez for numerous insightful conversations.

\section{Background}\label{s:background}

We begin by defining numerical monoids (Definition~\ref{d:numericalmonoid}) and introducing three of the main factorization invariants of interest: factorization sets, length sets, and delta sets (Definitions~\ref{d:factorization} and~\ref{d:deltaset}).  In this section, and in the majority of this paper, monoids are written additively, and $\mathbb N$ denotes the set of non-negative integers.

\begin{defn}\label{d:monoid}
A commutative monoid $M$ (written additively) is \emph{cancellative} if for any $a,b,c \in M$, $a + b = a + c$ implies that $b = c$.  An element $u \in M$ is said to be \emph{irreducible} (or an \emph{atom}) if whenever $u = a + b$ for $a,b \in M$, then either $a$ or $b$ is a unit in $M$.  The monoid $M$ is said to be \emph{atomic} if for every $m \in M$, there exist irreducible elements $u_1, u_2, \ldots, u_r \in M$ such that $m = u_1 + u_2 +  \cdots + u_r$.  
\end{defn}

\begin{remark}\label{r:monoid}
Unless otherwise stated, all monoids in this paper are assumed to be cancellative, commutative, and atomic.  
\end{remark}

This paper will focus on the factorization theory of additive submonoids of $\NN$.

\begin{defn}\label{d:numericalmonoid}
For relatively prime positive integers $n_1, n_2, \ldots, n_k \in \NN$, the \emph{numerical monoid $S$ generated by $\{n_1, n_2, \ldots, n_k\}$} is the additive submonoid of $\NN$ given by
$$S = \<n_1, n_2, \ldots, n_k\> = \{c_1n_1 + c_2n_2 + \cdots + c_kn_k  : c_i \in \mathbb N\} \subset \NN.$$
The \emph{Frobenius number} $F(S) = \max(\NN \setminus S)$ is the largest integer lying outside of $S$, and an element $n \in \ZZ \setminus S$ is a \emph{pseudo-Frobenius number} if $n + n_i \in S$ for all $n_i$.  
\end{defn}

\begin{remark}\label{r:numericalmonoid}
Every co-finite, additive submonoid of $\NN$ is a numerical monoid generated by a set of relatively prime generators $\{n_1, n_2, \ldots, n_k\}$.  In fact, there exists a unique collection of generators that is minimal with respect to set-theoretic inclusion.  For this generating set, the irreducible elements of the numerical monoid coincide precisely with the generators.  Thus, every numerical monoid is cancellative, reduced, and finitely generated. 
Unless otherwise stated, when we write $S = \<n_1, n_2, \ldots, n_k \>$, we assume that the $n_i$ constitute a minimal generating set and that $n_1 < n_2 < \cdots < n_k$.  
\end{remark}

Of particular interest in the study of numerical monoids is its non-unique factorization theory.  We establish here notation for factorizations of elements in these monoids.

\begin{defn}\label{d:factorization}
Let $S = \<n_1, n_2, \ldots, n_k\>$ be a numerical monoid.  A \emph{factorization} of $x \in S$ is a $k$-tuple $\aa = (a_1, a_2, \ldots, a_k) \in \NN^k$ such that $x = a_1n_1 + a_2n_2 + \cdots + a_kn_k$, and the \emph{set of factorizations of $x$ in $S$} is given by $$\mathsf Z_S(x) = \{\aa \in \NN ^k \, : \, a_1n_1 + a_2n_2 + \cdots + a_kn_k = x \}.$$  When the monoid is understood, this is often denoted $\mathsf Z(x)$.  The \emph{length} of a factorization  $\aa$ is given by $|\aa| = \sum_{i=1}^k a_i$. 
\end{defn}

In a numerical monoid $S$, the set of factorizations $\mathsf Z(x)$ is usually cumbersome, especially for larger values of $x \in S$.  Therefore, many factorization invariants are computed using only the lengths of factorizations.  This leads us to the concepts of the length set and delta set.

\begin{defn}\label{d:deltaset}  Given a numerical monoid $S = \<n_1, n_2, \ldots, n_k \>$ and $x \in S$, the \emph{length set of $x$} is given by $$\mathsf L(x) = \{ |\aa| \, : \, \aa \in \mathsf Z(x)\}.$$  If we order the elements of $\mathsf L(x)$ in increasing order $\mathsf L(x) = \{l_1 < l_2 < \cdots < l_r\}$, the \emph{delta set of $x$} is given by $$\Delta(x) = \{l_i - l_{i-1} \, : \, 2 \leq i \leq r \}.$$  The \emph{delta set} of $S$ is given as the union of delta sets of all non-identity elements:  $$\Delta(S) = \!\!\!\! \bigcup_{x \in S \setminus \{0\}} \!\!\!\! \Delta(x).$$
\end{defn}

\begin{example}\label{e:McNugget}
Consider the numerical monoid $M = \<6,9,20\>,$ which has Frobenius number $F(M) = 43$.  The element $60 \in M$ has factorization set $$\mathsf Z(60) = \{(0,0,3), (1,6,0), (4,4,0), (7,2,0),(10,0,0)\}.$$  This produces the length set $\mathsf L(60) = \{3,7,8,9,10\}$ and delta set $\Delta(60) = \{1,4\}$.  
\end{example}

While the delta set of a numerical monoid $S$ is the union of delta sets of all of its (infinitely many) non-identity elements, one needs only to compute $\Delta(x)$ for finitely many $x \in S$.  A weaker version of Theorem~\ref{t:deltaperiodic}, which appeared in~\cite{deltaperiodic}, states that delta sets in a numerical monoid are eventually periodic and that $\Delta(S)$ is obtained by taking the union of $\Delta(n)$ over every $n \le 2kn_2n_k^2 + n_1n_k$.  More recently, the authors of~\cite{compasympdelta} give a greatly improved bound $N_S$ (significantly smaller than the bound from~\cite{deltaperiodic}) for the start of this periodic behavior, and show that $\Delta(S)$ can be computed by taking the union of $\Delta(n)$ over every $n \le N_S + n_k - 1$.  

\begin{thm}\cite[Corollary~18]{compasympdelta}\label{t:deltaperiodic}
Fix a numerical monoid $S = \<n_1, \ldots, n_k\>$.  There exists an integer $N_S$ such that $n \ge N_S$ implies $\Delta(n) = \Delta(n + \lcm(n_1,n_k))$.  
\end{thm}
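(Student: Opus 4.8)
The plan is to pin down $\mathsf L(n)$ for large $n$ accurately enough to read off $\Delta(n)$, and then to observe that each of the finitely many pieces of that description repeats with period $L:=\lcm(n_1,n_k)$. Put $g=\gcd(n_1,n_k)$, $p=n_k/g$ and $q=n_1/g$, so that $pn_1=qn_k=L$ and $p>q$. If $k=2$ there is nothing to prove, since $\mathsf L(n)$ is then a full arithmetic progression of difference $n_2-n_1$ and so $\Delta(n)\subseteq\{n_2-n_1\}$ for every $n$; assume $k\ge 3$. The first ingredient is the \emph{edge behaviour} of length sets. Since $\max\mathsf L(n)=n/n_1+O(1)$ whereas any factorization of $n$ avoiding $n_1$ has length at most $n/n_2$, and $\min\mathsf L(n)=n/n_k+O(1)$ whereas any factorization avoiding $n_k$ has length at least $n/n_{k-1}$ --- discrepancies that grow linearly in $n$ --- one obtains $\max\mathsf L(n+n_1)=\max\mathsf L(n)+1$ and $\min\mathsf L(n+n_k)=\min\mathsf L(n)+1$ for all $n$ past some bound. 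Running the same comparison once more: for any fixed $D$, beyond a $D$-dependent bound every factorization of $n$ of length within $D$ of $\min\mathsf L(n)$ must use $n_k$, and every one of length within $D$ of $\max\mathsf L(n)$ must use $n_1$; deleting a single copy of $n_k$ (resp. $n_1$) from such factorizations sets up bijections showing that $A(n):=\mathsf L(n)\cap[\min\mathsf L(n),\min\mathsf L(n)+D]$ satisfies $A(n+n_k)=A(n)+1$ and $B(n):=\mathsf L(n)\cap[\max\mathsf L(n)-D,\max\mathsf L(n)]$ satisfies $B(n+n_1)=B(n)+1$.

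For the middle of the length set I would invoke the structure theorem for length sets of numerical monoids: there is a fixed common difference $d$ and a constant $D$ (which I now fix) such that, for every large $n$, $\mathsf L(n)$ is an arithmetic progression of difference $d$ together with bounded ``defects'' contained in $[\min\mathsf L(n),\min\mathsf L(n)+D]$ and in $[\max\mathsf L(n)-D,\max\mathsf L(n)]$. Because $\max\mathsf L(n)-\min\mathsf L(n)\to\infty$, for $n$ large the three sets $A(n)$, $M(n):=\mathsf L(n)\cap(\min\mathsf L(n)+D,\max\mathsf L(n)-D)$ and $B(n)$ are consecutive blocks partitioning $\mathsf L(n)$, with $M(n)$ a full arithmetic progression of difference $d$, $\min M(n)=\min\mathsf L(n)+c_1(n)$ and $\max M(n)=\max\mathsf L(n)-c_2(n)$, where $c_1(n+n_k)=c_1(n)$ and $c_2(n+n_1)=c_2(n)$: the place where the clean progression begins (resp. ends) is determined by $A(n)$ (resp. $B(n)$), which is periodic after normalization by the previous paragraph. (That a progression must appear is already visible from the trade that replaces $q$ copies of $n_k$ by $p$ copies of $n_1$, which raises length by $p-q$ without changing the element; a short computation using $\gcd(g,d)=1$ gives $d\mid p-q$, consistent with this.)

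Since the three blocks sit consecutively when $\mathsf L(n)$ is listed in increasing order,
\[
\Delta(n)\;=\;\Delta\big(A(n)\big)\ \cup\ \{\,\min M(n)-\max A(n)\,\}\ \cup\ \{d\}\ \cup\ \{\,\min B(n)-\max M(n)\,\}\ \cup\ \Delta\big(B(n)\big),
\]
with the evident convention for the delta set of a finite set of integers. Replacing $n$ by $n+L$: from $A(n+L)=A(n+qn_k)=A(n)+q$ and $B(n+L)=B(n+pn_1)=B(n)+p$ the end contributions $\Delta(A(\cdot))$ and $\Delta(B(\cdot))$ are unchanged, $\{d\}$ is unchanged, and the two junction gaps are unchanged, because $\max A(n)$, $\min M(n)$ and $\min\mathsf L(n)$ each increase by exactly $q$ while $\min B(n)$, $\max M(n)$ and $\max\mathsf L(n)$ each increase by exactly $p$. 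Taking $N_S$ to be the largest of the finitely many ``for $n$ large'' thresholds used above (including the one ensuring $\min\mathsf L(n)+D<\max\mathsf L(n)-D$) then gives $\Delta(n)=\Delta(n+L)$ for all $n\ge N_S$.

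The hard part will be the middle step: obtaining the structure theorem together with the \emph{quantitative} refinements that the bottom defect is eventually $n_k$-periodic, the top defect is eventually $n_1$-periodic, and the progression between them has an $n$-independent difference, all with a single uniform truncation constant $D$. The remaining bookkeeping is routine but does genuinely exploit that $L$ is a common multiple of $n_1$ and $n_k$: under $n\mapsto n+L$ the two ends of $\mathsf L(n)$ translate by the \emph{different} amounts $q$ and $p$, and it is only upon passing to consecutive differences that this discrepancy disappears.
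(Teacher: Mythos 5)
This theorem is not proved in the paper at all: it is quoted from \cite[Corollary~18]{compasympdelta} (and in weaker form from \cite{deltaperiodic}), so there is no internal argument to compare against. Judged on its own terms, your outline has the right overall shape, and the edge analysis is sound: the facts $\max\mathsf L(n+n_1)=\max\mathsf L(n)+1$ and $\min\mathsf L(n+n_k)=\min\mathsf L(n)+1$ for large $n$, the observation that factorizations of length within $D$ of an extreme must use $n_1$ (resp.\ $n_k$), and the resulting bijections giving $A(n+n_k)=A(n)+1$ and $B(n+n_1)=B(n)+1$ are all correct and follow the known quasilinearity of extremal lengths (cf.\ \cite{elastsets}). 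The final recombination under $n\mapsto n+L$, exploiting that the two ends translate by the different amounts $q$ and $p$ but that consecutive differences are insensitive to this, is also fine.

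The genuine gap is the one you yourself flag: the middle block. You invoke a ``structure theorem'' asserting that for all large $n$ the set $\mathsf L(n)$ is a full arithmetic progression of a \emph{single fixed} difference $d$ away from two end-windows of \emph{uniformly} bounded width $D$. No off-the-shelf result delivers this. The Structure Theorem for Sets of Lengths for finitely generated monoids gives that each $\mathsf L(n)$ is an AAMP with difference lying in the finite set $\Delta^*(S)$ and with a uniform bound, but the difference may a priori vary with $n$; pinning it down to one value (necessarily $d=\gcd(n_2-n_1,\dots,n_k-n_{k-1})=\min\Delta(S)$) for all large $n$, with the defects confined to $O(1)$-neighborhoods of the ends, is essentially equivalent in difficulty to the periodicity statement you are trying to prove, and your argument supplies nothing toward it. Without it the decomposition $\Delta(n)=\Delta(A(n))\cup\{\cdot\}\cup\{d\}\cup\{\cdot\}\cup\Delta(B(n))$ is unfounded. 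There is also a smaller (fixable) slip: $\min M(n)-\min\mathsf L(n)$ is the location of the first length \emph{above} $\min\mathsf L(n)+D$, which is not determined by $A(n)$ as you defined it; you would need to widen the bottom window by at least $\max\Delta(S)$ (whose finiteness is itself a theorem) before the $n_k$-periodicity of $A$ controls it. For contrast, the cited proofs avoid the structure theorem entirely and argue directly, comparing $\mathsf L(n+\lcm(n_1,n_k))$ with the two shifted copies $\mathsf L(n)+L/n_k$ and $\mathsf L(n)+L/n_1$ obtained by adding blocks of $n_k$'s or $n_1$'s and showing every factorization of a large element can be reduced by one of these blocks; that route is more elementary and is what actually yields the explicit bounds $N_S$ used elsewhere in this paper.
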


\begin{example}\label{e:deltaperiodic}
Figure~\ref{f:deltaperiodic} plots the delta sets of elements in $M = \<6,9,20\>$.  The bound given in \cite{compasympdelta} for the start of periodic behavior is $N_S = 144$, and we can see from the plot that the actual start of this behavior is 91.  Additionally, the eventual period is 20, which by~\cite{compasympdelta} is guaranteed to divide (but clearly need not equal to) $\lcm(6,20) = 60$.  Using this, we can compute the delta set of $M$ to be $\Delta(M) = \{1,2,3,4\}$.
\end{example}

\begin{figure}
\includegraphics[width=5.6in]{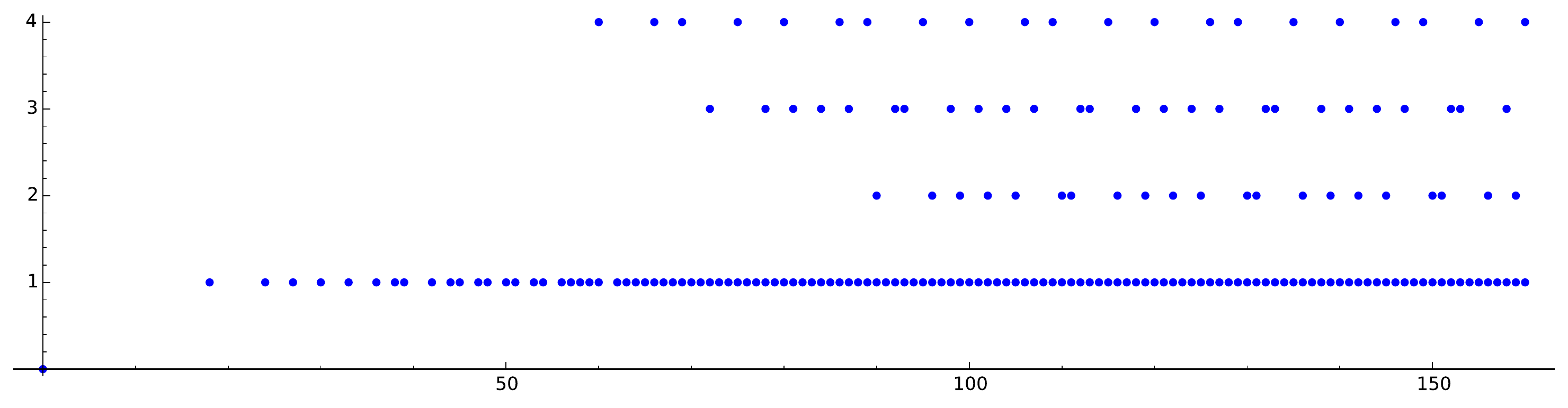}
\medskip
\caption{A plot showing the delta sets of elements in the numerical monoid $S = \<6,9,20\>$ from Example~\ref{e:omegaext}.  Here, a dot is placed at the point $(n,d)$ whenever $d \in \Delta(n)$.}  
\label{f:deltaperiodic}
\end{figure}

\section{Factorization sets and delta sets in numerical monoids}\label{s:dynamicfactor}

This section provides dynamic algorithms to compute factorization sets, length sets, and delta sets for numerical monoids (Algorithms~\ref{a:dynamicfactor},~\ref{a:dynamiclengths}, and ~\ref{a:dynamicdelta}, respectively), along with corresponding proofs of correctness.  Furthermore, we demonstrate the significant runtime advantages of these dynamic algorithms when compared to existing (non-dynamic) algorithms.  

We begin with a lemma describing an inductive relationship between the factorization sets of elements.  As this result applies to a significantly larger class of monoids than numerical monoids, it is stated in full generality.  In what follows, $\ee_i$ denotes the $i$th unit vector in $\NN^k$.

\begin{lemma}\label{l:dynamicfactor}
Fix a reduced, finitely generated monoid $M$ (written additively) with irreducible elements $m_1, m_2, \ldots, m_k$.  For each non-zero $x \in M$, we have 
$$\begin{array}{rcl}
\mathsf Z(x) &=& \bigcup_{i = 1}^k~\{\aa + \ee_i : \aa \in \mathsf Z(x - m_i)\} \\
&=& \bigsqcup_{i = 1}^k~\{\aa + \ee_i : \aa \in \mathsf Z(x - m_i), a_j = 0 \text{ for each } j < i\}.
\end{array}$$

\end{lemma}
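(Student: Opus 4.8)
The plan is to establish the two displayed equalities separately, adopting throughout the convention that $\mathsf Z(y) = \varnothing$ whenever $y \notin M$, so that each summand $\{\aa + \ee_i : \aa \in \mathsf Z(x - m_i)\}$ on the right-hand sides is well defined (and is simply empty when $x - m_i \notin M$). It is worth noting up front which hypotheses do the work: finite generation provides the finite index set $i \in \{1, \dots, k\}$, while cancellativity and reducedness only serve to make the identification of factorizations with tuples in $\NN^k$ unambiguous; the combinatorial content of the argument uses little beyond $x \neq 0$.

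For the first equality, the inclusion $\supseteq$ is immediate: if $\aa \in \mathsf Z(x - m_i)$ then $\sum_j a_j m_j = x - m_i$, and adding $m_i$ to both sides gives $\aa + \ee_i \in \mathsf Z(x)$. For $\subseteq$, fix any $\bb \in \mathsf Z(x)$. Since $x \neq 0$ we cannot have $\bb = \0$, so $b_i \geq 1$ for some $i$. Then $\bb - \ee_i \in \NN^k$, and $\sum_j (\bb - \ee_i)_j m_j = x - m_i$ shows both that $x - m_i \in M$ and that $\bb - \ee_i \in \mathsf Z(x - m_i)$; hence $\bb = (\bb - \ee_i) + \ee_i$ lies in the $i$-th term of the union.

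For the second equality, observe first that the disjoint union on the right sits inside the ordinary union in the middle, since it is obtained by imposing the extra constraint $a_j = 0$ for $j < i$ on the factorizations $\aa$. For the reverse containment, I would rerun the previous paragraph's argument but choose $i$ to be the \emph{least} index with $b_i \geq 1$: then $\aa := \bb - \ee_i$ satisfies $a_j = b_j = 0$ for every $j < i$, so $\bb$ lies in the $i$-th term of the disjoint union. To see the terms are pairwise disjoint, suppose $\bb$ lay in both the $i$-th and $i'$-th terms with $i < i'$; membership in the $i$-th term forces $b_i = a_i + 1 \geq 1$, whereas membership in the $i'$-th term forces $b_i = a'_i + (\ee_{i'})_i = 0$ (using $a'_j = 0$ for $j < i'$ together with $i < i'$), a contradiction.

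In short, the only points requiring care are the empty-set convention for $\mathsf Z(x - m_i)$, the appeal to $x \neq 0$ to produce a positive coordinate, and the choice of the minimal such coordinate, which simultaneously yields the coordinate constraints defining each term and the disjointness of the terms. I do not expect any substantive obstacle beyond this bookkeeping.
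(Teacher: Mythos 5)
Your proof is correct and follows essentially the same route as the paper's: both hinge on selecting the minimal index $i$ with $b_i \geq 1$, which simultaneously produces the membership in the $i$-th term of the disjoint union and forces the vanishing conditions $a_j = 0$ for $j < i$. The only organizational difference is that you prove the plain union first and the disjoint refinement second, whereas the paper proves the disjoint version and notes the first equality follows; your added details (the empty-set convention and the explicit disjointness check) are correct but not a different argument.
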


\begin{proof}
Fix $\aa \in \mathsf Z(x)$, and let $i = \min\{j : a_j \ne 0\}$.  Then $\aa - \ee_i \in \mathsf Z(x-m_i)$.  
Moreover, if $\aa - \ee_j \in \mathsf Z(x-m_j)$ for some $j$, then $j \ge i$ by minimality of $i$.  
This proves the second equality, from which the first equality follows.  
\end{proof}

\begin{example}\label{e:dynamicfactor}
Let $M = \<6,9,20\>$.  Lemma~\ref{l:dynamicfactor} allows us to compute $\mathsf Z(60)$ in terms of $\mathsf Z(40)$, $\mathsf Z(51)$, and $\mathsf Z(54)$.  In particular, each factorization of $54$ yields a factorization of $60$ with one additional copy of the irreducible $6$.  Similarly, each factorization of $51$ or $40$ yields a factorization for $60$ with one additional copy of $9$ or $20$, respectively.  We give the full computation below.  
$$\begin{array}{rcl}
\mathsf Z(40) &=& \{(0,0,2)\}, \\
\mathsf Z(51) &=& \{(1,5,0),(4,3,0),(7,1,0)\}, \\
\mathsf Z(54) &=& \{(0,6,0),(3,4,0),(6,2,0),(9,0,0)\}, 
\vspace{0.2cm} \\
\mathsf Z(60) &=& \{(0,0,3)\} \,\cup\, \{(1,6,0),(4,4,0),(7,2,0)\} \\
&& \,{}\cup{}\, \{(1,6,0),(4,4,0),(7,2,0),(10,0,0)\} \\
&=& \{(0,0,3),(1,6,0),(4,4,0),(7,2,0),(10,0,0)\}.
\end{array}$$
Since each factorization of $60$ has at least one irreducible, the above computation produces the entire set $\mathsf Z(60)$.  Given  $n \ge 0$, Algorithm~\ref{a:dynamicfactor} uses this method inductively to compute $\mathsf Z(i)$ for all $i \in \{0, 1,  \ldots, n\}$, with $\mathsf Z(0) = \{\mathbf 0\}$ as its starting point.  
\end{example}

\begin{alg}\label{a:dynamicfactor}
Given $n \in S = \<n_1, \ldots, n_k\>$, computes $\mathsf Z(m)$  for all $m \in [0, n] \cap S$.  
\begin{algorithmic}
\Function{FactorizationsUpToElement}{$S$, $n$}
\State $F[0] \gets \{\mathbf 0\}$
\ForAll{$m \in [0, n] \cap S$}
	\State $Z \gets \{\}$
	\ForAll{$i = 1, 2, \ldots, k$ with $m - n_i \in S$}
		\State $Z \gets Z \cup \{\aa + \ee_i : \aa \in F[m - n_i]\}$
	\EndFor
	\State $F[m] \gets Z$
\EndFor
\State \Return $F$
\EndFunction
\end{algorithmic}
\end{alg}

\begin{remark}
Table~\ref{tb:dynamicfactor} gives a runtime comparison for Algorithm~\ref{a:dynamicfactor} with the existing implementation of $\mathsf Z(n)$ in the GAP package \texttt{numericalsgps}.  Notice that Algorithm~\ref{a:dynamicfactor} is slower than the current GAP implementition for computing a single factorization set $\mathsf Z(n)$, but faster for computing a large collection of factorization sets, such as $\{(m,\mathsf Z(m)) : m \le n\}$.  The same holds true for memory usage: Algorithm~\ref{a:dynamicfactor} consumes more memory than the GAP implementation when computing a single factorization set, since it must store several factorization sets, but when computing $\{(m,\mathsf Z(m)) : m \le n\}$, it does not consume any more memory than is necessary for the returned list.  
\end{remark}

\begin{table}
{\footnotesize
\begin{tabular}{|l|r|r|c|l|l|}
$S$ & $n$ & $|\mathsf Z(n)|$ & $\mathsf Z(n)$ & $\{(m,\mathsf Z(m)) : m \le n\}$ & Alg.~\ref{a:dynamicfactor} \\
\hline
$\<10,17,19,25,31\>$ & $1000$ & $20293$ & 641 ms & 55342 ms & 20120 ms \\
$\<51,53,55,117\>$ & $5000$ & $1299$ & 54 ms & 61643 ms & 3874 ms \\
$\<7,15,17,18,20\>$ & $1000$ & $75375$ & 1028 ms & 234339 ms & 102857 ms \\
$\<100,121,142,163,284\>$ & $30000$ & $16569$ & 2437 ms & 13266033 ms & 660320 ms \\
\end{tabular}
}
\medskip
\caption{Runtimes for computing $\mathsf Z(n)$ and $\{(m,\mathsf Z(m)) : m \le n\}$ given $n \in S$.  The last column gives runtimes for Algorithm~\ref{a:dynamicfactor} implemented in GAP, and the two previous columns use the implementation of $\mathsf Z(n)$ found in the GAP package \texttt{numericalsgps} \cite{numericalsgps}.}
\label{tb:dynamicfactor}
\end{table}

Typically, a length set for an element is computed only once its factorization set is known.  Lemma~\ref{l:dynamiclengths} provides a dynamic algorithm that allows the computation of the length set of an element simply by knowing the lengths sets of smaller elements (and not their factorization sets).  

\begin{lemma}\label{l:dynamiclengths}
Fix a reduced, finitely generated monoid $M$ with irreducible elements $m_1, m_2, \ldots, m_k$.  We have 
$$\mathsf L(x)
= \bigcup_{i = 1}^k \left(\mathsf L(x-m_i) + 1\right)
= \bigcup_{i = 1}^k~\{l + 1 : l \in \mathsf L(x-m_i)\}$$
for each nonzero $x \in M$.  
\end{lemma}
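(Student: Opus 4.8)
The plan is to deduce this directly from Lemma~\ref{l:dynamicfactor} by applying the length map $\aa \mapsto |\aa|$ to both sides of the factorization identity. First I would fix a nonzero $x \in M$ and recall from Lemma~\ref{l:dynamicfactor} that $\mathsf Z(x) = \bigcup_{i=1}^k \{\aa + \ee_i : \aa \in \mathsf Z(x-m_i)\}$. Taking lengths of every factorization in this union, and using that $|\aa + \ee_i| = |\aa| + 1$, gives
$$\mathsf L(x) = \{|\bb| : \bb \in \mathsf Z(x)\} = \bigcup_{i=1}^k \{|\aa + \ee_i| : \aa \in \mathsf Z(x - m_i)\} = \bigcup_{i=1}^k \{|\aa| + 1 : \aa \in \mathsf Z(x-m_i)\}.$$
The inner set on the right is exactly $\{l + 1 : l \in \mathsf L(x-m_i)\} = \mathsf L(x-m_i) + 1$, which yields the claimed equalities. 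One subtlety worth spelling out: if $x - m_i \notin M$ then $\mathsf Z(x-m_i) = \nothing$ and the corresponding term contributes nothing to the union, so the formula is still correct (the union over such $i$ is vacuous on both sides); since $M$ is reduced, $x \neq 0$ guarantees at least one index $i$ with $x - m_i \in M$, so $\mathsf Z(x)$ and hence $\mathsf L(x)$ is nonempty, matching the nonvacuousness of the right-hand union.

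There is essentially no obstacle here — the content is entirely in Lemma~\ref{l:dynamicfactor}, and this statement is the image of that identity under the length homomorphism. The only care needed is to observe that passing to the union commutes with applying the length map (image of a union is the union of images), which is immediate, and to handle the empty-term bookkeeping described above. If one prefers to avoid citing Lemma~\ref{l:dynamicfactor}, a direct argument is equally short: for the inclusion $\supseteq$, given $l \in \mathsf L(x - m_i)$ pick $\aa \in \mathsf Z(x-m_i)$ with $|\aa| = l$; then $\aa + \ee_i \in \mathsf Z(x)$ and $|\aa + \ee_i| = l+1$, so $l + 1 \in \mathsf L(x)$. For $\subseteq$, given $l \in \mathsf L(x)$ choose $\bb \in \mathsf Z(x)$ with $|\bb| = l$; since $x \neq 0$ and $M$ is reduced, $\bb$ has a nonzero coordinate, say $b_i \geq 1$, so $\bb - \ee_i \in \mathsf Z(x - m_i)$ and $l - 1 = |\bb - \ee_i| \in \mathsf L(x-m_i)$, giving $l \in \mathsf L(x-m_i) + 1$. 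Either route completes the proof.
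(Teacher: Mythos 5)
Your argument is correct and is exactly the paper's proof: apply the length map to the first equality of Lemma~\ref{l:dynamicfactor}, noting $|\aa + \ee_i| = |\aa| + 1$ and that images commute with unions. The extra bookkeeping about empty terms and the alternative direct two-inclusion argument are fine but not needed.
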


\begin{proof}
Apply the length function to both sides of the first equality of Lemma~\ref{l:dynamicfactor}.  
\end{proof}

\begin{example}\label{e:dynamiclengths}
We resume notation from Example~\ref{e:dynamicfactor}.  Lemma~\ref{l:dynamiclengths} specializes the computation of $\mathsf Z(n)$ in Lemma~\ref{l:dynamicfactor} to length sets, allowing us to compute $\mathsf L(60)$ from $\mathsf L(40)$, $\mathsf L(51)$, and $\mathsf L(54)$.  The key observation is that producing a factorization for $60$ from a factorization of $40$, $51$ or $54$ using Lemma~\ref{l:dynamicfactor} always increases length by exactly one.  Given below is the full computation.  
$$\begin{array}{rcl}
\mathsf L(60) &=& \left(\mathsf L(40) + 1\right) \,\cup\, \left(\mathsf L(51) + 1\right) \,\cup\, \left(\mathsf L(54) + 1\right) \\
&=& \{3\} \,\cup\, \{7,8,9\} \,\cup\, \{7,8,9,10\} \\
&=& \{3,7,8,9,10\}
\end{array}$$
Given $n \ge 0$, Algorithm~\ref{a:dynamiclengths} uses this method to dynamically compute $\mathsf L(m)$ for all $m \in [0, n] \cap S$, which Algorithm~\ref{a:dynamicdelta} then uses to compute $\bigcup_{m = 1}^n \Delta(m)$.  Notice that this does not require computing any factorizations; see Remark~\ref{r:deltacomparison}.  
\end{example}

\begin{alg}\label{a:dynamiclengths}
Given $n \in S = \<n_1, \ldots, n_k\>$, computes $\mathsf L(m)$ for all $m \in [0, n] \cap S$.  
\begin{algorithmic}
\Function{LengthSetsUpToElement}{$S$, $n$}
\State $\mathcal L[0] \gets \{\mathbf 0\}$
\ForAll{$m \in [0, n] \cap S$}
	\State $\mathsf L \gets \{\}$
	\ForAll{$i = 1, 2, \ldots, k$ with $m - n_i \in S$}
		\State $\mathsf L \gets \mathsf L \cup \{l + 1 : l \in \mathcal L[m - n_i]\}$
	\EndFor
	\State $\mathcal L[m] \gets L$
\EndFor
\State \Return $\mathcal L$
\EndFunction
\end{algorithmic}
\end{alg}

\begin{remark}\label{r:dynamicdelta}
Since Algorithm~\ref{a:dynamiclengths} computes $\{\mathsf L(m) : m \in [0, n] \cap S\}$, it can also be used to compute $\bigcup_{m = 1}^n \Delta(m)$ by first computing $\{\Delta(m) : m \in [0, n] \cap S\}$.  As stated in Theorem~\ref{t:deltaperiodic}, $\Delta(m)$ is periodic for $m$ greater than an integer $N_S$ described in~\cite{compasympdelta}.  Together with Algorithm~\ref{a:dynamiclengths}, this yields Algorithm~\ref{a:dynamicdelta} for computing $\Delta(S)$.  
\end{remark}

\begin{alg}\label{a:dynamicdelta}
Given a numerical monoid $S$, computes $\Delta(S)$.  
\begin{algorithmic}
\Function{DeltaSet}{$S$}
\State Compute $\mathcal L$ using Algorithm~\ref{a:dynamiclengths}
\State Compute $N_S$ using \cite[Section~3]{compasympdelta}
\State Compute $\Delta = \bigcup_{m \in (0, N_S + \lcm(n_1,n_k)] \cap S} \Delta(m)$
\State \Return $\Delta$
\EndFunction
\end{algorithmic}
\end{alg}

\begin{remark}\label{r:deltacomparison}
In addition to giving an improved bound $N_S$ on the start of periodic behavior of $\Delta_S$ for any numerical monoid $S = \<n_1, \ldots, n_k\>$, the authors also give \cite[Algorithm~21]{compasympdelta} to find $\Delta(S)$ by first computing $\mathsf Z(N_S + n_k - n_1), \ldots, \mathsf Z(N_S + n_k - 1)$.  Table~\ref{tb:dynamicdelta} gives a runtime comparison between Algorithm~\ref{a:dynamicdelta} and \cite[Algorithm~21]{compasympdelta}, demonstrating a marked improvement in computation time.  

Our method of computing $\Delta(S)$ has several advantages over \cite[Algorithm~21]{compasympdelta}.  First and foremost is memory comsumption.  For large $n$, factorization sets grow large very quickly \cite{factorasymp}, whereas $|\mathsf L(n)|$ grows linearly in $n$ \cite[Theorem~4.3]{elastsets}.  Since \cite[Algorithm~21]{compasympdelta} requires computing $\mathsf Z(n)$ for several large $n$, it is often memory intensive.  Algorithm~\ref{a:dynamicdelta}, on the other hand, avoids the computation of factorization sets altogether by only computing length sets.  

Due in part to the low memory footprint, our algorithm is significantly faster than \cite[Algorithm~21]{compasympdelta}.  Some runtimes for \cite[Algorithm~21]{compasympdelta} are omitted from Table~\ref{tb:dynamicdelta}, as the high memory requirements left us unable to complete the computation.  However, \cite[Table~2]{compasympdelta} gives a runtime for the computation of $\Delta(\<31,73,77,87,91\>)$ on the order of 24,000 seconds, a stark contrast to the 4.2 seconds required for Algorithm~\ref{a:dynamicdelta}.  
\end{remark}

\begin{remark}\label{r:deltaringbuffer}
Although Algorithm~\ref{a:dynamicdelta} as stated requires the computation of $\mathsf L(i)$ for all $i \le N_S$, Algorithm~\ref{a:dynamiclengths} computes each $\mathsf L(i)$ using values at least $i - n_k$.  As such, in implementing Algorithm~\ref{a:dynamicdelta} to compute $\Delta(S)$, one only needs to store $n_k$ length sets at any given time.  The implemenation of Algorithm~\ref{a:dynamicdelta} in GAP stores the length sets in a ring buffer of length $n_k$, cutting the memory requirements even further.  
\end{remark}

\begin{remark}\label{r:newdeltaalgs}
Since the implementation of Algorithm~\ref{a:dynamicdelta} in the \texttt{numericalsgps} package~\cite{numericalsgps}, other promising delta set algorthms have been developed.  Recent results of the second author of this manuscript \cite{factorhilbert} have produced an algorithm that computes the delta set of any affine monoid (a strictly more general setting) and appears to run faster than Algorithm~\ref{a:dynamicdelta}.  This algorithm is currently being implemented in GAP and will likely be included in a future version of the \texttt{numericalsgps} package.  Additionally, a log-time algorithm to compute $\Delta(S)$ is given in \cite{deltadim3} for the special case where $S = \<n_1, n_2, n_3\>$ and $S$ is non-symmetric.  
\end{remark}

\begin{table}
{\footnotesize
\begin{tabular}{|l|r|r|c|l|l|}
$S$ & $N_S$ \cite{compasympdelta} & Dis. & $\Delta(S)$ & \cite[Alg.~21]{compasympdelta} & Alg.~\ref{a:dynamicdelta} \\
\hline
$\<10,17,19,25,31\>$ & $1180$ & $76$ & $\{1,2,3\}$ & 3254 ms & 45 ms \\
$\<51,53,55,117\>$ & $9699$ & $1006$ & $\{2,4,6\}$ & 23565 ms & 250 ms \\
$\<7,15,17,18,20\>$ & $1935$ & $46$ & $\{1,2,3\}$ & 88831 ms & 146 ms \\
$\<7,19,20,25,29\>$ & $3894$ & $76$ & $\{1,2,3,5\}$ & -------- ms & 624 ms \\
$\<11,53,73,87\>$ & $14381$ & $873$ & $\{2,4,6,8,10,22\}$ & 49418 ms & 2588 ms \\
$\<31,73,77,87,91\>$ & $31364$ & $558$ & $\{2,4,6\}$ & -------- ms & 4274 ms \\
$\<100,121,142,163,284\>$ & $24850$ & $5499$ & $\{21\}$ & -------- ms & 3697 ms \\
$\<1001,1211,1421,1631,2841\>$ & $2063141$ & $114535$ & $\{10,20,30\}$ & -------- ms & 116371 ms \\
\end{tabular}
}
\medskip
\caption{Runtime comparison for computing $\Delta(S)$.  Some computations for \cite[Algorithm~21]{compasympdelta} could not be completed due to insufficient available memory and hence have been omitted.  All computations were completed using GAP and the package \texttt{numericalsgps} \cite{numericalsgps}.}
\label{tb:dynamicdelta}
\end{table}

\section{$\omega$-primality in the quotient group $\mathsf q(M)$}\label{s:omegaext}

In the remaining sections, we turn our attention from factorization sets and delta sets to $\omega$-primality (Definition~\ref{d:omegaext}), a factorization invariant that has received much attention in recent investigations~\cite{andalg,omegaquasi,omegamonthly,compasympomega}.  The results presented in these sections are motivated by those in Section~\ref{s:dynamicfactor}, namely that dynamic computations can yield significant performance improvements when running a large collection of computations.  

In this section, we provide the necessary definitions and motivation related to the $\omega$-primality invariant.  We begin with a definition of the quotient group of a monoid.


\begin{remark}\label{r:quotientgroup}
Let $\mathsf q(M)$ denote the quotient group of a monoid $M$.  The map $M \to \mathsf q(M)$ given by $m \mapsto (m - 0)$ is an injective monoid homomorphism, and we often identify elements of $M$ with their image in $\mathsf q(M)$ and view $M \subset \mathsf q(M)$.  Under this convention, $x \mid y$ for elements $x, y \in \mathsf q(M)$ if $y = a + x$ for some $a \in M$.  
\end{remark}

The extension of the $\omega$-function on a monoid to elements of its quotient group is one of the key insights of this paper.  In this section, we develop the theory behind this extension, with minimal assumptions on the underlying monoid.  

\begin{defn}\label{d:omegaext}
Fix a monoid $M$.  The \emph{$\omega$-primality function} $\omega_M: \mathsf q(M) \to \NN \cup \{\infty\}$ is given by $\omega_M(x) = m$ if $m$ is the smallest positive integer with the property that whenever $\sum_{i = 1}^r a_i - x \in M$ for $r > m$ and $a_i \in M$, there exists a subset $T \subset \{1, \ldots, r\}$ with $|T| \le m$ such that $\sum_{i \in T} a_i - x \in M$.  If no such $m$ exists, define $\omega_M(x) = \infty$.  When $M$ is clear from context, we simply write~$\omega(x)$.  
\end{defn}

\begin{remark}\label{r:omegaext}
It is easy to check that the function $\omega_M$ defined above coincides with that defined in \cite[Definitions~2.3 and~3.4]{omegamonthly} for elements of $M$, so Definition~\ref{d:omegaext} simply extends the domain of $\omega_M$ from $M$ to $\mathsf q(M)$.  Example~\ref{e:omegaext} demonstrates that this is a natural extension, and the results that follow show that many of the properties of the usual $\omega$-function still hold in this new setting.  
\end{remark}

\begin{remark}\label{r:omegamult}
Written in a multiplicative setting, Definition~\ref{d:omegaext} becomes more transparent; see \cite[Definition~2.3]{omegamonthly} for more detail.  In particular, is clear that prime monoids elements $x \in M$ are precisely those satisfying $\omega(x) = 1$.   In fact, much of the seminal work using $\omega$-primality focused on computing $\omega$-values for irreducible elements, as non-unique factorizations arise from irreducible elements that are not prime.
\end{remark}

\begin{example}\label{e:omegaext}
Let $S = \<6,9,20\>$ denote the numerical monoid from Example~\ref{e:dynamicfactor}.  Since $S$ has finite complement in $\NN$, $\mathsf q(S)$ is naturally isomorphic to $\ZZ$, with the obvious inclusion map.  Figure~\ref{f:omegaext} plots side-by-side the $\omega$-values of $S$ (as defined in \cite{omegamonthly}) and those of $\mathsf q(S)$ (from Definition~\ref{d:omegaext}).  Notice the plotted values coincide for each $n \in S$, and the $\omega_S$-values in the right-hand plot of elements lying in the complement of $S$ seem to ``fill in'' the missing values in the left-hand plot.  
\end{example}

\begin{figure}
\includegraphics[width=2.5in]{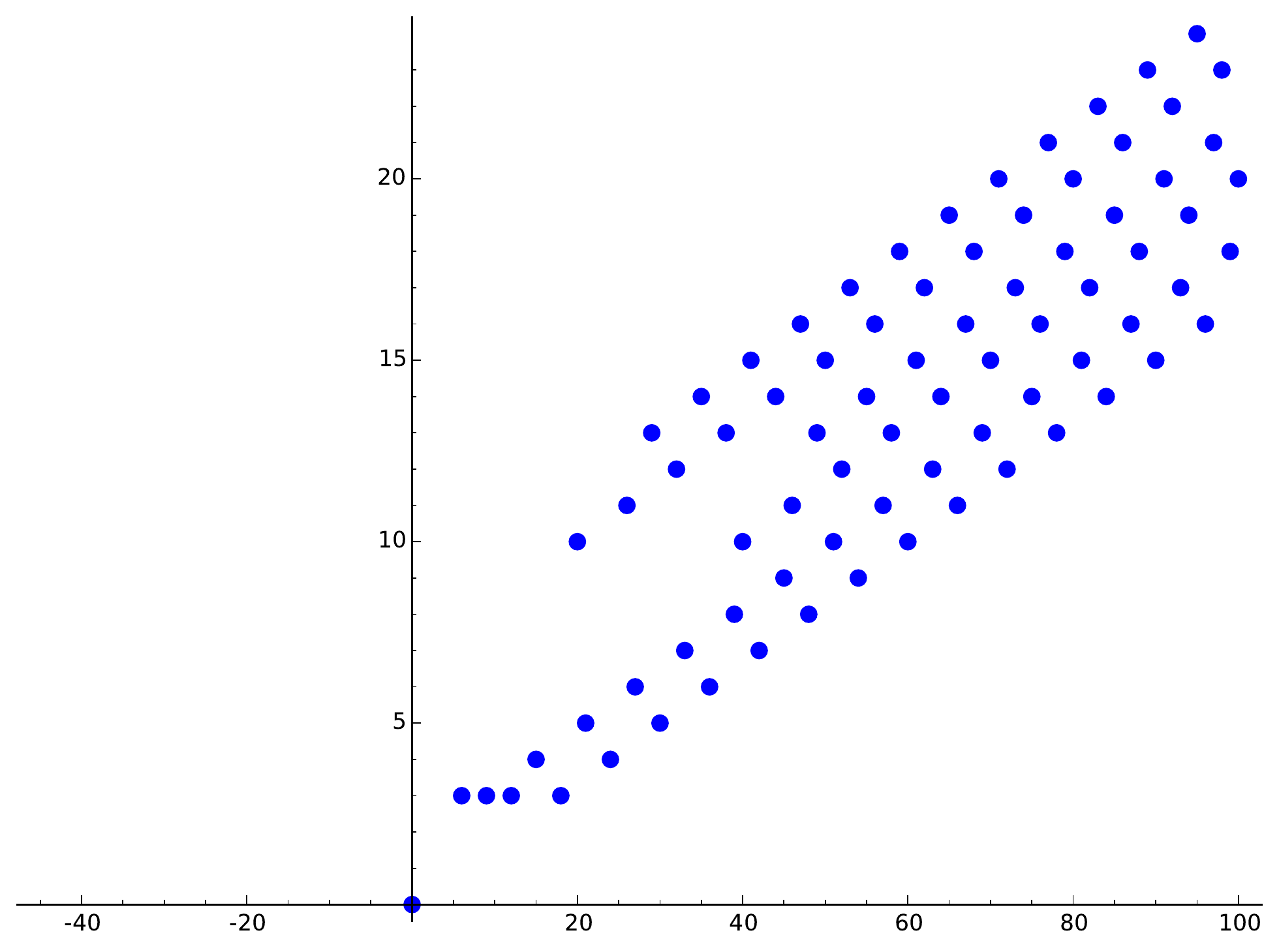}
\hspace{0.4in}
\includegraphics[width=2.5in]{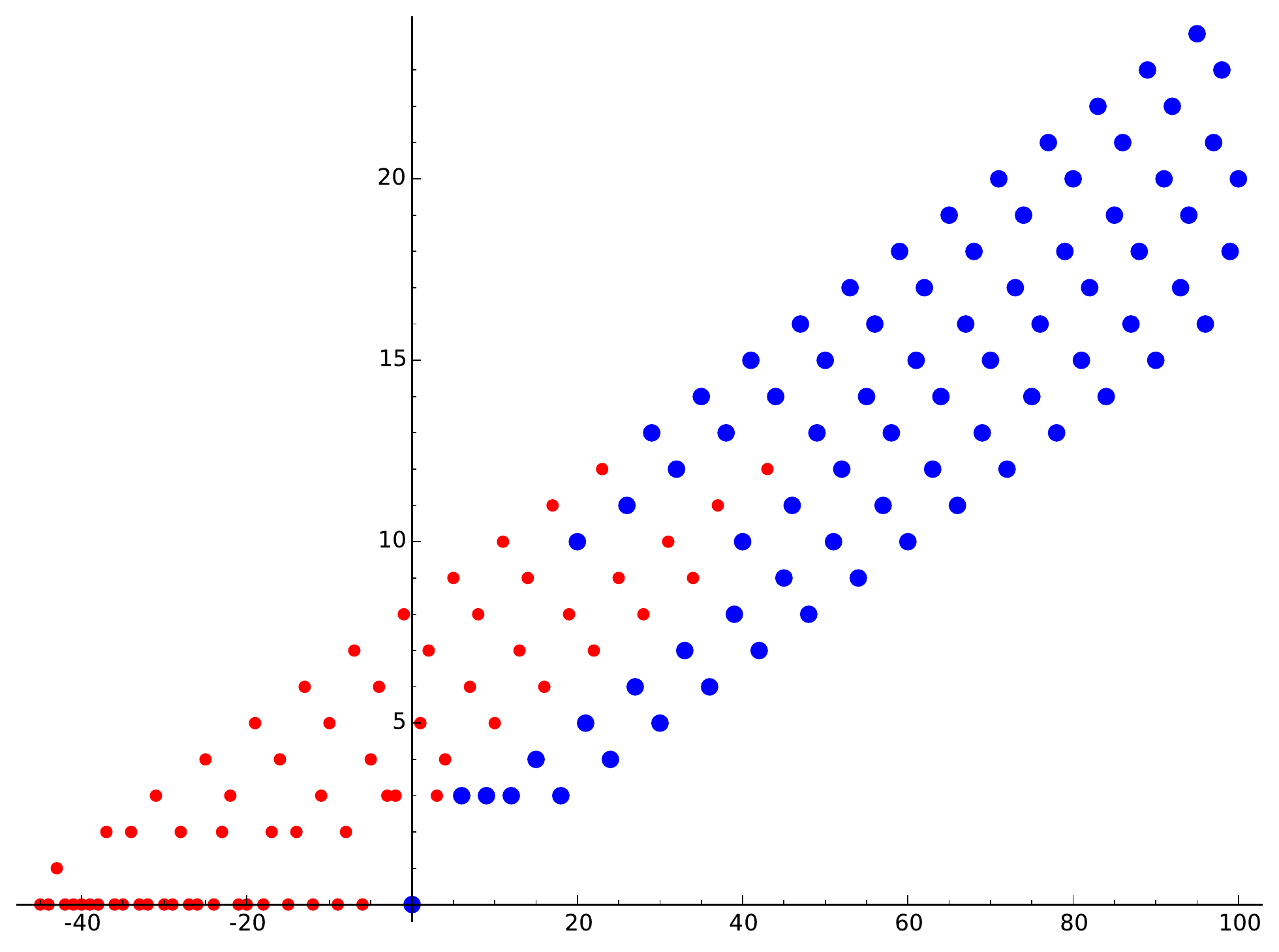}
\medskip
\caption{A plot of the $\omega$-values of elements in the numerical monoid $S = \<6,9,20\>$ (left) and its quotient group (right) discussed in Example~\ref{e:omegaext}.  The smaller red dots in the right-hand plot mark $\omega$-values of elements lying outside of $S$.}  
\label{f:omegaext}
\end{figure}

One of the key ideas used to study $\omega$-primality in recent years is its characterization in terms of bullet lengths \cite[Proposition~2.10]{omegamonthly}.  We now extend the definition of bullets to elements of the quotient group (Definition~\ref{d:bulletext}) and recover the usual characterization of $\omega$-primality in terms of their length (Proposition~\ref{p:bulletext}).  Note that Proposition~\ref{p:bulletext} also implies that we may assume the elements $a_i$ in Defintion~\ref{d:omegaext} are irreducible.  

\begin{defn}\label{d:bulletext}
Fix a monoid $M$.  A \emph{bullet} for $x \in \mathsf q(M)$ is an expression $u_1 + \cdots + u_r$ of irreducible elements $u_1, \ldots, u_r \in M$ such that (i) $u_1 + \cdots + u_r - x \in M$, and (ii) $u_1 + \cdots + u_r - x - u_i \notin M$ for each $i \le r$.  The \emph{value} of a bullet $u_1 + \cdots + u_r$ is the element $u_1 + \cdots + u_r \in M$, and its \emph{length} is $r$.  The set of bullets of $x$ is denoted $\bul(x)$.  
\end{defn}

\begin{remark}\label{r:bulletext}
For a monoid element $m \in M$, the set $\bul(m)$ in Definition~\ref{d:bulletext} is identical to the usual definition (\cite[Definition~2.8]{omegamonthly}), since any element of $\mathsf q(M)$ that $m$ divides also must lie in $M$.  Both of these definitions differ slightly from the classical definition of a bullet; see~\cite{semitheor}.  
\end{remark}

\begin{remark}\label{r:bullettuple}
Fix a monoid $M$ and an element $x \in M$.  If, additionally, $M$ is both reduced (that is, $M$ has no non-identity units) and finitely generated, then $M$ has only finitely many irreducible elements $u_1, \ldots, u_k$.  In this setting, we can denote a bullet $b_1u_1 + \cdots + b_ku_k \in \bul(x)$ by $\bb = (b_1, \ldots, b_k) \in \NN^k$.  This is of particular use in Sections~\ref{s:dynamicomega} and~\ref{s:omegabound} (as well as Example~\ref{e:covermaps}), where $\omega$-primality for numerical monoids is examined in more detail.  
\end{remark}

\begin{prop}\label{p:bulletext}
Given any monoid $M$, 
$$\omega(x) = \sup\{r : u_1 + \cdots + u_r \in \bul(x), u_i \emph{\text{ irreducible}}\}$$
for each element $x \in \mathsf q(M)$.  
\end{prop}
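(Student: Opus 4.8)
The plan is to prove the two inequalities $B\le\omega$ and $\omega\le B$, where $\omega=\omega_M(x)$ and $B$ denotes the quantity on the right-hand side of the asserted equality, i.e.\ the supremum of the lengths of the bullets for $x$. For $B\le\omega$ I would fix an arbitrary bullet $u_1+\cdots+u_r\in\bul(x)$ and argue that $r\le\omega$. Supposing not, $\omega$ is finite with $\omega<r$, so Definition~\ref{d:omegaext} applied to the elements $a_i=u_i$ (which satisfy $\sum_{i=1}^r a_i-x\in M$ by part~(i) of Definition~\ref{d:bulletext}, and have $r>\omega$) produces a subset $T\subseteq\{1,\dots,r\}$ with $|T|\le\omega<r$ and $\sum_{i\in T}u_i-x\in M$. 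Picking $j\in\{1,\dots,r\}\setminus T$,
$$\sum_{i\ne j}u_i-x=\Big(\sum_{i\in T}u_i-x\Big)+\sum_{i\notin T\cup\{j\}}u_i\in M,$$
since the right-hand side is a sum of elements of $M$, contradicting part~(ii) of Definition~\ref{d:bulletext}. Hence every bullet has length at most $\omega$, giving $B\le\omega$; and if the bullet lengths are unbounded the same contradiction shows that no positive integer can equal $\omega$, so $\omega=\infty=B$.

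For $\omega\le B$ I may assume $B<\infty$, and it then suffices to verify that $B$ has the property defining $\omega$ in Definition~\ref{d:omegaext}, since $\omega$ is the least such positive integer. The first step is a reduction to irreducible summands. Given $a_1,\dots,a_r\in M$ with $r>B$ and $\sum_{i}a_i-x\in M$, discard the indices with $a_i=0$ and use atomicity (Remark~\ref{r:monoid}) to write each remaining $a_i$ as a sum of irreducibles; collecting these yields irreducibles $v_1,\dots,v_N$ with $N\ge r>B$ and $\sum_j v_j=\sum_i a_i$, so $\sum_j v_j-x\in M$. If the irreducible case supplies $T'\subseteq\{1,\dots,N\}$ with $|T'|\le B$ and $\sum_{j\in T'}v_j-x\in M$, let $T$ be the set of indices $i$ for which at least one irreducible constituent of $a_i$ lies in $T'$. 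Distinct $a_i$ contribute disjoint constituents, so $|T|\le|T'|\le B$; moreover
$$\sum_{i\in T}a_i-x=\Big(\sum_{i\in T}a_i-\sum_{j\in T'}v_j\Big)+\Big(\sum_{j\in T'}v_j-x\Big)\in M,$$
because the first bracket is a sum of the remaining constituents of the $a_i$ with $i\in T$ and hence lies in $M$. This reduces the claim to the assertion: for irreducible $u_1,\dots,u_r$ with $r>B$ and $\sum_i u_i-x\in M$, there is a set $T$ with $|T|\le B$ and $\sum_{i\in T}u_i-x\in M$.

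To finish, I would choose $T\subseteq\{1,\dots,r\}$ minimal with respect to inclusion among the subsets satisfying $\sum_{i\in T}u_i-x\in M$; such subsets exist because $\{1,\dots,r\}$ is one. Minimality says $\sum_{i\in T\setminus\{j\}}u_i-x\notin M$ for every $j\in T$, so $\{u_i\}_{i\in T}$ is precisely a bullet for $x$ in the sense of Definition~\ref{d:bulletext}, whence $|T|\le B$; this completes the reduction and the proof that $\omega=B$. The step I expect to be the main obstacle is exactly this passage between the two formulations: Definition~\ref{d:omegaext} quantifies over arbitrary elements of $M$, whereas bullets are built from irreducibles, so the reduction above — together with the bookkeeping that the induced index set $T$ is no larger than $T'$ and that $\sum_{i\in T}a_i$ dominates $\sum_{j\in T'}v_j$ — is the real content, the concluding ``trim to a minimal witness'' step being routine. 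One should also check the boundary case $-x\in M$ (where $\bul(x)$ contains only the empty expression) directly.
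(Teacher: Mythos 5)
Your proof is correct and is essentially the standard argument that the paper defers to by citing \cite[Proposition~2.10]{omegamonthly}: the ``omit one index'' contradiction for $\sup \le \omega$, and the refine-to-irreducibles-then-trim-to-a-minimal-witness step for $\omega \le \sup$. The minor bookkeeping points you flag (discarding zero summands, the boundary case $-x \in M$ where only the empty bullet survives) are handled correctly or harmlessly, so nothing further is needed.
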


\begin{proof}
This is identical to the proof of~\cite[Proposition~2.10]{omegamonthly}.  
\end{proof}

The notion of cover maps between bullet sets, first introduced in \cite[Definition~3.4]{omegaquasi}, will be critical in the dynamic algorithm for computing $\omega$-primality (Algorithm~\ref{a:dynamicomega}).  In the remainder of this section, we extend the original definition of cover maps to elements of the quotient group (Definition~\ref{d:covermaps}) and provide stronger results than the prior setting would allow (see Remark~\ref{r:covermaps}).  First, we give Lemma~\ref{l:subsume}, which was stated in~\cite{andalg} for numerical monoids and played a crucial role in developing the first algorithm to compute $\omega$-primality.  

\begin{lemma}\label{l:subsume}
Fix a monoid $M$, an element $x \in \mathsf q(M)$, a bullet $u_1 + \cdots + u_r \in \bul(x)$, and an irreducible element $u \in M$.  
Then $u + u_1 + \cdots + u_r \notin \bul(x)$.  
\end{lemma}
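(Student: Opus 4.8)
The plan is to unpack Definition~\ref{d:bulletext} directly for the putative bullet $u + u_1 + \cdots + u_r$ and show that condition~(ii) must fail. Write $y = u_1 + \cdots + u_r$ and recall that $u_1 + \cdots + u_r \in \bul(x)$ means $y - x \in M$ and $y - x - u_i \notin M$ for each $i \le r$. Now consider the expression $u + u_1 + \cdots + u_r$, whose value is $u + y$; its candidate for condition~(ii) requires that $u + y - x - u_i \notin M$ for each of the $r+1$ summands, in particular for the summand equal to $u$ itself.

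The key step is to test the summand $u$: condition~(ii) for the expression $u + u_1 + \cdots + u_r$ would demand $u + y - x - u \notin M$, i.e. $y - x \notin M$. But this directly contradicts condition~(i) for the original bullet, which gives $y - x \in M$. Hence condition~(ii) fails for $u + u_1 + \cdots + u_r$ at the summand $u$, and therefore $u + u_1 + \cdots + u_r \notin \bul(x)$, as claimed. (One should note that $u + u_1 + \cdots + u_r - x = u + (y - x) \in M$ since $u \in M$ and $M$ is closed under addition, so condition~(i) does hold here — it is precisely condition~(ii) that breaks, which is the content worth spelling out.)

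I do not expect any real obstacle here: the argument is a one-line consequence of the definition, the only subtlety being to correctly identify which of the two bullet conditions is violated and to keep the cancellation in $\mathsf q(M)$ straight (which is legitimate since $\mathsf q(M)$ is a group, so $u + y - x - u = y - x$ with no hypotheses needed). If anything, the write-up should be careful to phrase "$u + u_1 + \cdots + u_r \notin \bul(x)$" as "at least one of the defining conditions of a bullet fails", and then exhibit the specific failure; there is no need to analyze the summands $u_i$ at all.

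\begin{proof}
Write $y = u_1 + \cdots + u_r$, so that conditions~(i) and~(ii) of Definition~\ref{d:bulletext} give $y - x \in M$ and $y - x - u_i \notin M$ for each $i \le r$. Consider the expression $u + u_1 + \cdots + u_r$, whose value is $u + y$. If this were a bullet for $x$, then condition~(ii), applied to the summand $u$, would require $(u + y) - x - u \notin M$. But in the group $\mathsf q(M)$ we have $(u + y) - x - u = y - x$, and $y - x \in M$ by condition~(i) for the original bullet. This is a contradiction, so $u + u_1 + \cdots + u_r \notin \bul(x)$.
\end{proof}
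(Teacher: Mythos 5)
Your proof is correct and is exactly the paper's argument: the paper's one-line proof ("Omitting $u$ yields the expression $u_1 + \cdots + u_r$, which $x$ divides") is precisely your observation that condition~(ii) of Definition~\ref{d:bulletext} fails at the summand $u$. Your version simply spells out the cancellation $(u + y) - x - u = y - x$ explicitly; no difference in substance.
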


\begin{proof}
Omitting $u$ yields the expression $u_1 + \cdots + u_r$, which $x$ divides.   
\end{proof}

Much like Lemma~\ref{l:dynamicfactor} for factorizations, the bullet set of an element $x$ is determined by the bullet sets of its divisors.   Definition~\ref{d:covermaps} specifies how to determine the image of each bullet as demonstrated in Example~\ref{e:covermaps}, and Theorem~\ref{t:covermaps} ensures the resulting map is well-defined.  First, we give an example.  

\begin{example}\label{e:covermaps}
Let $S = \<6,9,20\>$ denote the numerical monoid from Example~\ref{e:omegaext}.  Consider $n = 60$ and the following bullet sets: 
$$\begin{array}{rcl}
\bul(40) &=& \{(0,0,2),(4,4,0),(7,2,0),(10,0,0),(1,6,0),(0,8,0)\}, \\
\bul(51) &=& \{(0,7,0),(10,0,0),(4,3,0),(1,5,0),(0,0,3),(7,1,0)\}, \\
\bul(54) &=& \{(9,0,0),(6,2,0),(0,6,0),(3,4,0),(0,0,3)\}, \\
\bul(60) &=& \{(4,4,0),(7,2,0),(10,0,0),(1,6,0),(0,8,0),(0,0,3)\}
\end{array}$$
We see that for each bullet $\bb \in \bul(54)$, either $\bb \in \bul(60)$ or $\bb + \ee_1 \in \bul(60)$.  Notice that it is impossible for both of these to lie in $\bul(60)$ by Lemma~\ref{l:subsume}.  Similarly, for each bullet $\bb \in \bul(51)$, either $\bb \in \bul(60)$ or $\bb + \ee_2 \in \bul(60)$, and for each $\bb \in \bul(40)$, either $\bb \in \bul(60)$ or $\bb + \ee_3 \in \bul(60)$.  Moreover, each bullet for $60$ is the ``image'' of a bullet for $54$, $51$ or $40$ in this way.  The resemblance to Lemma~\ref{l:dynamicfactor} here is not a coincidence; Theorem~\ref{t:omegafactor} makes this similarity precise.  
\end{example}

\begin{thm}\label{t:covermaps}
Let $M$ be a monoid.  Fix an element $x \in \mathsf q(M)$, an irreducible $u \in M$, and a bullet $u_1 + \cdots + u_r \in \bul(x)$.  
\begin{enumerate}
\item[(i)] If $u_1 + \cdots + u_r - (u + x)\in M$, then $u_1 + \cdots + u_r \in \bul(u + x)$.
\item[(ii)] If $u_1 + \cdots + u_r - (u + x) \notin M$, then $u + u_1 + \cdots + u_r \in \bul(u + x)$.  
\end{enumerate}
\end{thm}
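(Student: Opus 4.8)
The plan is to verify the defining conditions (i) and (ii) of a bullet (Definition~\ref{d:bulletext}) in each of the two cases, using the hypothesis that $u_1 + \cdots + u_r \in \bul(x)$ together with cancellativity of $M$. Throughout, write $v = u_1 + \cdots + u_r$ for brevity, so $v - x \in M$ and $v - x - u_i \notin M$ for each $i \le r$.

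\textbf{Case (i): $v - (u+x) \in M$.} Here we must show $v \in \bul(u+x)$. Condition (i) of Definition~\ref{d:bulletext} is exactly the hypothesis $v - (u+x) \in M$. For condition (ii), fix $i \le r$ and suppose toward a contradiction that $v - (u+x) - u_i \in M$. Then $v - x - u_i = u + \big(v - (u+x) - u_i\big) \in M$ since $M$ is closed under addition and $u \in M$. This contradicts $v - x - u_i \notin M$. Hence $v \in \bul(u+x)$.

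\textbf{Case (ii): $v - (u+x) \notin M$.} Here we must show $u + v \in \bul(u+x)$, i.e.\ the expression $u + u_1 + \cdots + u_r$ of irreducibles. Condition (i): $(u+v) - (u+x) = v - x \in M$ by hypothesis on the original bullet. For condition (ii), we must check $(u+v) - (u+x) - w \notin M$ where $w$ ranges over the $r+1$ irreducible summands of $u+v$, namely $w = u$ and $w = u_i$ for $i \le r$. If $w = u$, then $(u+v)-(u+x)-u = v - (u+x) \notin M$ by the case hypothesis. If $w = u_i$, then $(u+v)-(u+x)-u_i = v - x - u_i \notin M$ since $v \in \bul(x)$. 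So $u + v \in \bul(u+x)$.

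\textbf{Main obstacle.} There is essentially no obstacle here: both cases reduce to a one-line manipulation exploiting that adding an element of $M$ to something already in $M$ keeps it in $M$, and that cancellativity lets us rearrange $u + (v - (u+x) - u_i) = v - x - u_i$ unambiguously in $\mathsf q(M)$. The only point requiring mild care is bookkeeping in case (ii): one must remember to check condition (ii) against *all* $r+1$ summands of the new expression, including the newly adjoined copy of $u$ — and it is precisely the case hypothesis $v - (u+x) \notin M$ that handles that extra summand, which is why the two cases are set up as complementary.
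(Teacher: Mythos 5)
Your proof is correct and follows essentially the same route as the paper's: a direct verification of the two conditions in Definition~\ref{d:bulletext} in each of the two complementary cases, using only that $M$ is closed under addition. The one (minor) difference is that you explicitly check condition (ii) against the newly adjoined summand $u$ in case (ii) — where the case hypothesis $v - (u+x) \notin M$ is exactly what is needed — a check the paper's proof leaves implicit.
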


\begin{proof}
Suppose $u_1 + \cdots + u_r - (u + x) \in M$.  For each $i \le r$, we also have 
$$u_1 + \cdots + u_r - (u_i + u + x) = (u_1 + \cdots + u_r - x - u_i) - u \notin M$$
since $u_1 + \cdots + u_r - (u_i + x) \notin M$.  This means $u_1 + \cdots + u_r \in \bul(u + x)$.  

Next, suppose $u_1 + \cdots + u_r - (u + x) \notin M$.  We now verify the necessary conditions.  
\begin{enumerate}
\item[(i)] Since $u_1 + \cdots + u_r \in \bul(x)$, we have $u + u_1 + \cdots + u_r - (u + x) = u_1 + \cdots + u_r - x \in M$.  
\item[(ii)] For each $i \le r$, $u + u_1 + \cdots + u_r - (u_i + u + x) = u_1 + \cdots + u_r - (u_i - x) \notin M$.  
\end{enumerate}
Thus, $u + u_1 + \cdots + u_r \in \bul(u + x)$.  
\end{proof}

\begin{defn}\label{d:covermaps}
Fix a monoid $M$, and an irreducible $u \in M$.  For $x \in \mathsf q(M)$, the \emph{$u$-cover map} $\psi_M^u:\bul(x) \to \bul(u + x)$ is given by 
$$\psi_M^u(u_1 + \cdots + u_r) = \left\{\begin{array}{ll}
u_1 + \cdots + u_r & \text{if } u_1 + \cdots + u_r - (u + x) \in M \\
u + u_1 + \cdots + u_r & \text{otherwise} 
\end{array}\right.$$
for each $u_1 + \cdots + u_r \in \bul(x)$.  When there is no confusion, we often omit the subscript.  
\end{defn}

\begin{remark}\label{r:covermaps}
The cover map $\phi_M^u$ was also defined in~\cite{omegaquasi} in the context of numerical monoids, though its domain was restricted to bullets in which $u$ appears.  Both the extended domain of $\psi_M^u$ in Definition~\ref{d:covermaps} and the extended domain of $\omega_M$ in Definition~\ref{d:omegaext} are crucial for the results that follow.  In fact, Theorem~\ref{t:bulletscovered} and Corollary~\ref{c:bulletscovered} only held for ``sufficiently large'' elements of $M$ prior to making these extentions.  These stronger statements are our primary motivation for extending $\omega_M$ to $\mathsf q(M)$, as they ensure the correctness of Algorithm~\ref{a:dynamicomega} for all (numerical) monoid elements.  
\end{remark}

\begin{thm}\label{t:bulletscovered}
Fix a monoid $M$, $x \in \mathsf q(M)$, and a bullet $u_1 + \cdots + u_r \in \bul(x)$.  For each $j \le r$, $u_1 + \cdots + u_r - u_j \in \bul(x - u_j)$.  
\end{thm}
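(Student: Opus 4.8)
The plan is a direct verification against the two conditions in Definition~\ref{d:bulletext}. Write $b = u_1 + \cdots + u_r$, so the hypothesis ``$b \in \bul(x)$'' unpacks as (i) $b - x \in M$ and (ii) $b - x - u_i \notin M$ for every $i \le r$. Fix $j \le r$. I want to show that the expression $b - u_j = \sum_{i \ne j} u_i$, which is again a sum of irreducible elements of $M$ (just a sub-collection of $u_1, \ldots, u_r$, with one copy of $u_j$ deleted), satisfies conditions (i) and (ii) relative to the element $x - u_j \in \mathsf q(M)$.

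First I would check condition (i) for $x - u_j$: the value of the new expression minus $x - u_j$ is $(b - u_j) - (x - u_j) = b - x$, which lies in $M$ by hypothesis (i). Next I would check condition (ii): for each of the remaining summands, indexed by $i \ne j$, the relevant difference is $(b - u_j) - (x - u_j) - u_i = b - x - u_i$, which is not in $M$ by hypothesis (ii) (applied to this $i$, which still satisfies $i \le r$). These are exactly the two requirements of Definition~\ref{d:bulletext}, so $b - u_j \in \bul(x - u_j)$, as claimed.

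There is essentially no obstacle here; the statement is a one-line cancellation once the definitions are spelled out. The only points worth noting are that removing a single summand from $b$ leaves a well-defined expression of irreducibles even when some of the $u_i$ coincide (one simply deletes one copy), that $x - u_j$ makes sense because $\mathsf q(M)$ is a group containing both $x$ and $u_j \in M$, and — in the degenerate case $r = 1$ — that the empty sum is permitted, its condition (ii) being vacuous while condition (i) reads $0 - (x - u_j) = u_j - x = b - x \in M$. Since the two displayed identities only use that $\mathsf q(M)$ is a group, no atomicity, reducedness, or finite-generation hypothesis on $M$ is needed, which is consistent with the generality of the statement.
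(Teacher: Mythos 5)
Your proof is correct and is essentially identical to the paper's: both verify the two conditions of Definition~\ref{d:bulletext} directly by the cancellation $(b - u_j) - (x - u_j) = b - x$ and, for $i \ne j$, $(b-u_j) - (x-u_j) - u_i = b - x - u_i$. Your extra remarks on multiplicities and the $r=1$ case are harmless elaborations, not a different argument.
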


\begin{proof}
Since $u_1 + \cdots + u_r \in \bul(x)$, we have 
\begin{itemize}
\item[(i)] $(u_1 + \cdots + u_r - u_j) - (x - u_j) = u_1 + \cdots + u_r - x \in M$, and 
\item[(ii)] $(u_1 + \cdots + u_r - u_j) - (u_i + x - u_i) = u_1 + \cdots + u_r - (u_i + x) \notin M$ for each $i \ne j$.  
\end{itemize}
This means $u_1 + \cdots + u_r - u_j \in \bul(x - u_j)$.  
\end{proof}

Corollary~\ref{c:bulletscovered} follows directly from Theorems~\ref{t:covermaps} and~\ref{t:bulletscovered}, and will serve as the inductive step of Algorithm~\ref{a:dynamicomega}.  It also justifies use of the term ``cover map''.  

\begin{cor}\label{c:bulletscovered}
If $M$ is a monoid, $x \in \mathsf q(M)$, and $u_1 + \cdots + u_r \in \bul(x)$, then
$$\bul(x) = \bigcup_{i \le r} \psi_M^{u_i}(\bul(x - u_i)).$$
\end{cor}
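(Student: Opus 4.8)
The plan is to prove the two inclusions separately, using the machinery of Theorems~\ref{t:covermaps} and~\ref{t:bulletscovered} assembled in the right order. For the inclusion $\bigcup_{i \le r} \psi_M^{u_i}(\bul(x - u_i)) \subseteq \bul(x)$, I would fix an index $i \le r$ and an arbitrary bullet $\bb \in \bul(x - u_i)$, and observe that $u_i + (x - u_i) = x$, so that Theorem~\ref{t:covermaps}, applied with the element ``$x$'' there taken to be $x - u_i$ and the irreducible ``$u$'' taken to be $u_i$, says precisely that $\psi_M^{u_i}(\bb) \in \bul((x-u_i) + u_i) = \bul(x)$. This direction is essentially immediate once one matches up the notation, and it does not use the hypothesis that $u_1 + \cdots + u_r$ is itself a bullet for $x$.

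For the reverse inclusion $\bul(x) \subseteq \bigcup_{i \le r} \psi_M^{u_i}(\bul(x - u_i))$, I would take an arbitrary bullet $\cB = v_1 + \cdots + v_s \in \bul(x)$ and produce an index $i \le r$ together with a bullet $\dd \in \bul(x - u_i)$ whose image under $\psi_M^{u_i}$ is $\cB$. The natural case split is on whether $\cB - (x - u_i) \in M$ for some choice of $i$. The key subcase: if there exists $i \le r$ with $\cB$ actually containing a copy of $u_i$ among $v_1, \ldots, v_s$ and with $v_1 + \cdots + v_s - u_i - (x - u_i) = \cB - x \notin M$ after removing that copy — then Theorem~\ref{t:bulletscovered} (applied to $\cB \in \bul(x)$ and the index of that copy of $u_i$) gives $\cB - u_i \in \bul(x - u_i)$, and then $\psi_M^{u_i}(\cB - u_i) = \cB$ because $(\cB - u_i) - (u_i + (x - u_i)) = \cB - u_i - x \notin M$ (since $\cB \in \bul(x)$ forbids removing the atom $u_i$). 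The remaining possibility is that $\cB - (x - u_i) \in M$ for some $i \le r$, in which case Theorem~\ref{t:bulletscovered} again gives a bullet for $x - u_i$ (one sees $\cB \in \bul(x - u_i)$ directly from $\cB \in \bul(x)$ and $\cB - (x-u_i) \in M$, checking condition (ii) is inherited) and $\psi_M^{u_i}(\cB) = \cB$. I would organize this so the dichotomy is: either some $u_i$ divides $x$ through $\cB$ in the sense $\cB - (x - u_i) \in M$, or not — and show the hypothesis $u_1 + \cdots + u_r \in \bul(x)$ forces at least one of the $u_i$ to ``appear'' in $\cB$, which is exactly the content needed to fall into the first case.

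The main obstacle, and the place deserving the most care, is showing that in the reverse inclusion \emph{some} index $i \le r$ works — i.e., that the union on the right is actually large enough to capture every bullet of $x$. This is where the hypothesis $u_1 + \cdots + u_r \in \bul(x)$ is used in an essential way: given an arbitrary $\cB \in \bul(x)$, one must argue that $\cB$ is hit by $\psi_M^{u_i}$ for at least one of the specific atoms $u_1, \ldots, u_r$ coming from the fixed bullet, not merely for some atom of $M$. I expect the cleanest route is to show directly that if $\cB - (x - u_i) \notin M$ for every $i \le r$, then one can nonetheless run Theorem~\ref{t:bulletscovered} on $\cB$ at an appropriate coordinate, and otherwise pick $i$ with $\cB - (x-u_i) \in M$; the subtlety is confirming the two cases exhaust all possibilities, which ultimately reduces to the fact that $u_1 + \cdots + u_r - x \in M$ together with minimality properties of bullets. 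Once the correct $i$ and preimage $\dd$ are identified, verifying $\psi_M^{u_i}(\dd) = \cB$ is a routine unwinding of Definition~\ref{d:covermaps}.
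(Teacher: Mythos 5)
Your forward inclusion is fine, but the reverse inclusion has two concrete problems, one of which is fatal. First, your case split on whether $\cB - (x - u_i) \in M$ is degenerate: since $\cB \in \bul(x)$ gives $\cB - x \in M$, we always have $\cB - (x-u_i) = (\cB - x) + u_i \in M$, so your "remaining possibility" is in fact the only case that ever occurs, and your other subcase is empty. Moreover, within that case the claim that condition (ii) of Definition~\ref{d:bulletext} "is inherited" is false: $\cB - (x - u_i) - v_j = (\cB - x - v_j) + u_i$ can land in $M$ even though $\cB - x - v_j \notin M$, so $\cB$ need not lie in $\bul(x - u_i)$. Second, and more seriously, the step you flag as "the main obstacle" --- that every $\cB \in \bul(x)$ is hit by $\psi_M^{u_i}$ for one of the atoms $u_i$ of the \emph{fixed} bullet --- cannot be closed, because the statement as literally printed is false. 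In $S = \<6,9,20\>$ with $x = 60$ (Example~\ref{e:covermaps}), take the fixed bullet to be $(10,0,0)$, i.e.\ ten copies of $6$; the right-hand side is then just $\psi_M^{6}(\bul(54))$, which has at most $|\bul(54)| = 5$ elements, while $|\bul(60)| = 6$. Explicitly, $(0,8,0) \in \bul(60)$ is not in the image of $\psi_M^6$: its only atom is $9$ (so it is not of the form $\dd + \ee_1$), and $(0,8,0) \notin \bul(54)$ since $72 - 54 - 9 = 9 \in S$.

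What is actually intended (and what Algorithm~\ref{a:dynamicomega} uses as its inductive step) is the union over \emph{all} irreducibles $u$ of $M$, namely $\bul(x) = \bigcup_u \psi_M^u(\bul(x - u))$; the hypothesis that a bullet $u_1 + \cdots + u_r \in \bul(x)$ exists serves only to guarantee, via Proposition~\ref{p:omegazero}, that every bullet of $x$ is nonempty. For that statement the reverse inclusion is exactly your "key subcase," applied with $\cB$'s own atoms rather than the fixed bullet's: write $\cB = v_1 + \cdots + v_s$ with $s \ge 1$, note $\cB - v_j \in \bul(x - v_j)$ by Theorem~\ref{t:bulletscovered}, and conclude $\psi_M^{v_j}(\cB - v_j) = \cB$ because $(\cB - v_j) - (v_j + (x - v_j)) = \cB - x - v_j \notin M$ by bullet condition (ii). Together with Theorem~\ref{t:covermaps} for the forward inclusion, that two-line argument is the entirety of the intended proof; the extra machinery you build to transfer from $\cB$'s atoms to the fixed bullet's atoms is both unnecessary for the corrected statement and impossible for the literal one.
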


We conclude the section with Proposition~\ref{p:omegazero}, which drastically simplifies the base case for Algorithm~\ref{a:dynamicomega} (see Remark~\ref{r:basecase} for more detail).  

\begin{prop}\label{p:omegazero}
Fix a monoid $M$.  For each $x \in \mathsf q(M)$, the following are equivalent: 
\begin{enumerate}
\item[(i)] $\omega(x) = 0$.  
\item[(ii)] $\bul(x) = \{\mathbf 0\}$.  
\item[(iii)] $\mathbf 0 \in \bul(x)$.  
\item[(iv)] $-x \in M$.  
\end{enumerate}
\end{prop}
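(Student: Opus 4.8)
The plan is to derive all four equivalences by unwinding Definition~\ref{d:bulletext} together with the characterization $\omega(x) = \sup\{r : u_1 + \cdots + u_r \in \bul(x),\ u_i \text{ irreducible}\}$ from Proposition~\ref{p:bulletext}. The governing observation is that the empty expression, which we read as the length-$0$ bullet $\mathbf 0$, has value $0 \in M$: condition (ii) of Definition~\ref{d:bulletext} is then vacuous, while condition (i) reads $0 - x \in M$. Hence $\mathbf 0 \in \bul(x)$ if and only if $-x \in M$, which is exactly (iii) $\iff$ (iv).

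Next I would prove (iv) $\implies$ (ii). Assuming $-x \in M$ we already have $\mathbf 0 \in \bul(x)$, so it remains to exclude any bullet $u_1 + \cdots + u_r$ with $r \ge 1$. For such a bullet, condition (ii) of Definition~\ref{d:bulletext} applied with $i = 1$ would force $u_1 + \cdots + u_r - x - u_1 \notin M$; but $u_1 + \cdots + u_r - x - u_1 = (u_2 + \cdots + u_r) + (-x)$ is a sum of elements of $M$ and so lies in $M$, a contradiction. Thus $\bul(x) = \{\mathbf 0\}$. Since (ii) $\implies$ (iii) is immediate, the three statements (ii), (iii), (iv) are now equivalent; and (ii) $\implies$ (i) follows at once from Proposition~\ref{p:bulletext}, as $\bul(x) = \{\mathbf 0\}$ makes every bullet have length $0$, so that $\omega(x) = \sup\{0\} = 0$.

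It remains to show (i) $\implies$ (iv), which I would do by contraposition: if $-x \notin M$, I claim $\bul(x)$ contains a bullet of length at least $1$, whence $\omega(x) \ge 1$ by Proposition~\ref{p:bulletext}. To produce such a bullet, write $x = a - b$ with $a, b \in M$ (possible since $\mathsf q(M)$ is the group of differences of $M$; see Remark~\ref{r:quotientgroup}); then $a - x = b \in M$, so $x$ divides $a$, and $a \ne 0$ because $-x \notin M$. Using atomicity of $M$ (Remark~\ref{r:monoid}), factor $a$ into irreducibles and choose a sub-sum that is minimal among those sub-sums still divisible by $x$; conditions (i) and (ii) of Definition~\ref{d:bulletext} hold for it by the choice and by minimality respectively, and it is non-empty since the empty sub-sum would give $-x \in M$. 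This non-emptiness of $\bul(x)$ is the only delicate point: without it, ``$\omega(x) = 0$'' could in principle reflect an empty supremum rather than $\bul(x) = \{\mathbf 0\}$, and it is precisely atomicity that prevents this. Everything else is a direct reading of the definitions.
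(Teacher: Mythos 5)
Your proposal is correct, and its core ingredients coincide with the paper's: the observation that $\mathbf 0 \in \bul(x)$ if and only if $-x \in M$ (the empty expression has value $0$ and no condition (ii) to check), the fact that $-x \in M$ forbids any positive-length bullet because removing one irreducible leaves an expression still divisible by $x$ (this is exactly the content of Lemma~\ref{l:subsume}, which the paper cites for its step (iii)${}\implies{}$(i)), and Proposition~\ref{p:bulletext} to pass between bullet lengths and $\omega$-values. Where you genuinely diverge is in closing the cycle: the paper proves (i)${}\implies{}$(ii) tersely (``any bullet for $x$ has length at most $0$, so $\bul(x) = \{\mathbf 0\}$''), which silently uses that $\bul(x)$ is non-empty --- otherwise $\omega(x)=0$ could not be upgraded to $\mathbf 0 \in \bul(x)$. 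You instead prove (i)${}\implies{}$(iv) by contraposition, explicitly manufacturing a bullet of positive length when $-x \notin M$: write $x = a - b$, factor $a$ into irreducibles using atomicity, and take a sub-sum minimal among those still divisible by $x$; minimality gives condition (ii) of Definition~\ref{d:bulletext} and non-emptiness follows since the empty sub-sum would force $-x \in M$. This is a slightly longer route, but it buys an explicit proof that $\bul(x) \ne \emptyset$ for every $x \in \mathsf q(M)$, which is precisely the point the paper's argument leaves implicit; your flagging of the ``empty supremum'' issue is well taken.
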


\begin{proof}
We show (i)${}\implies{}$(ii)${}\implies{}$(iii)${}\implies{}$(i) and (iii)${}\Leftrightarrow{}$(iv).  

(i)${}\implies{}$(ii): 
If $\omega(x) = 0$, then any bullet for $x$ has length at most 0, so $\bul(x) = \{\mathbf 0\}$.  

(ii)${}\implies{}$(iii): 
This is clear.  

(iii)${}\implies{}$(i): 
This follows from Lemma~\ref{l:subsume}.  

(iii)${}\Leftrightarrow{}$(iv): 
This follows directly from Definition~\ref{d:omegaext}.  
\end{proof}

\begin{example}\label{e:omegazero}
Let $S = \<6,9,20\>$ denote the numerical monoid in Example~\ref{e:omegaext}.  Proposition~\ref{p:omegazero} implies $\omega_S(n) = 0$ for every $n < -43 = -F(S)$; see Figure~\ref{f:omegaext}.  
\end{example}

\section{A dynamic programming algorithm for $\omega$-primality}\label{s:dynamicomega}

In this section, we apply the results of Section~\ref{s:omegaext} in the setting of numerical monoids to obtain Algorithm~\ref{a:dynamicomega} for dynamically computing $\omega$-values, with significant runtime improvements over existing algorithms.  Since any numerical monoid $S$ has finite complement in $\NN$, we have $\mathsf q(S) \iso \ZZ$ and write $\ZZ$ in place of $\mathsf q(S)$ in what follows.  

\begin{remark}\label{r:basecase}
One of the main difficulties in developing Algorithm~\ref{a:dynamicomega} was determining where to start the inductive process.  While Algorithms~\ref{a:dynamicfactor} and~\ref{a:dynamiclengths} naturally begin at the irreducible elements of $S$, the $\omega$-value and bullet set of an irreducible element $x$ are only trivial when $x$ is prime, and this is never the case for non-trivial numerical monoids (see Proposition~\ref{p:omegapseudofrob}).  One possible (though inelegant) solution is to use existing algorithms to compute the bullet sets of every element below some threshold, and then use a dynamic algorithm for all further values.  

Thankfully, the extended notion of $\omega$-primality presented in Section~\ref{s:omegaext} allows us to avoid this caveat entirely.  Proposition~\ref{p:dynamicomega}a states that for any numerical monoid $S$, the smallest element of $\mathsf q(S) = \ZZ$ on which $\omega_S$ takes a nonzero value is $-F(S)$, providing a natural base case for Algorithm~\ref{a:dynamicomega}.  
\end{remark}

As a non-trivial numerical monoid $S$ contains no prime elements, $\omega_S$ only takes on values strictly greater than $1$ on elements in $S$.  Proposition~\ref{p:omegapseudofrob}, however, characterizes which elements of $\mathsf q(S)$ have an $\omega_S$-value of 1.  See \cite{redeisfiniteness,numerical,pseudofrob} for more background on pseudo-Frobenius numbers.  


\begin{prop}\label{p:omegapseudofrob}
Fix a numerical monoid $S = \<n_1, \ldots, n_k\>$.  For each $n \in \ZZ$, following are equivalent.  
\begin{enumerate}
\item[(i)] $\omega(n) = 1$.  
\item[(ii)] $\bul(n) = \{\ee_1, \ldots, \ee_k\}$.  
\item[(iii)] $-n$ is a pseudo-Frobenius number of $S$.  
\end{enumerate}
\end{prop}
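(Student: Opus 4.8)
The plan is to prove the cycle of implications $(i) \implies (iii) \implies (ii) \implies (i)$, using Proposition~\ref{p:bulletext} to pass between $\omega(n)$ and bullet lengths and Proposition~\ref{p:omegazero} to dispose of the degenerate case $-n \in S$.

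\emph{$(i) \implies (iii)$.} Suppose $\omega(n) = 1$. Then $\omega(n) \ne 0$, so $-n \notin S$ by Proposition~\ref{p:omegazero}; it remains to show $n_i - n \in S$ for every $i$. Fix $i$. Since $S$ is cofinite in $\NN$ and $n_i \ge 1$, the integer $c n_i - n$ lies in $S$ for all sufficiently large $c$, so $c_0 := \min\{c \ge 1 : c n_i - n \in S\}$ is well-defined, and $c_0 \ge 1$ precisely because $-n \notin S$. Then the expression consisting of $c_0$ copies of $n_i$ is a bullet for $n$: condition (i) of Definition~\ref{d:bulletext} holds by the choice of $c_0$, and condition (ii) holds because $c_0 n_i - n - n_i = (c_0 - 1) n_i - n \notin S$ by minimality of $c_0$. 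This bullet has length $c_0$, so $c_0 \le \omega(n) = 1$ by Proposition~\ref{p:bulletext}; hence $c_0 = 1$ and $n_i - n \in S$. As $i$ was arbitrary, $-n$ is a pseudo-Frobenius number of $S$.

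\emph{$(iii) \implies (ii)$.} Assume $-n \notin S$ and $n_i - n \in S$ for all $i$. Each $\ee_i$ is then a bullet for $n$, since condition (i) of Definition~\ref{d:bulletext} reads $n_i - n \in S$ and condition (ii) reads $-n \notin S$. Conversely, suppose $u_{j_1} + \cdots + u_{j_r} \in \bul(n)$ with $r \ge 2$. Condition (ii) applied to the first summand requires $u_{j_2} + \cdots + u_{j_r} - n \notin S$; but $u_{j_2}$ equals some $n_i$, so $u_{j_2} - n \in S$, and $u_{j_3} + \cdots + u_{j_r} \in S$, whence $u_{j_2} + \cdots + u_{j_r} - n \in S$, a contradiction. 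Since $-n \notin S$ also rules out the empty bullet, $\bul(n) = \{\ee_1, \ldots, \ee_k\}$.

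\emph{$(ii) \implies (i)$.} If $\bul(n) = \{\ee_1, \ldots, \ee_k\}$, then $\bul(n)$ is nonempty and every bullet of $n$ has length $1$, so $\omega(n) = 1$ by Proposition~\ref{p:bulletext}. The one genuinely substantive step is $(i) \implies (iii)$: the other two implications are direct unwindings of Definitions~\ref{d:omegaext} and~\ref{d:bulletext}. The key observation there is that cofiniteness of $S$ guarantees that, relative to any $n$, some positive multiple of each generator $n_i$ lands in $S$; the first such multiple is automatically a bullet, and forcing its length down to $1$ is exactly the assertion that $n_i - n \in S$. I expect the only place to slip to be the bookkeeping around the edge case $-n \in S$, which is handled cleanly by Proposition~\ref{p:omegazero}.
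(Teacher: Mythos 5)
Your argument is correct and essentially matches the paper's proof: both hinge on the cofiniteness/Archimedean observation that the minimal $c$ with $cn_i - n \in S$ gives a bullet $c\ee_i$ of length $c$, which $\omega(n)=1$ forces to satisfy $c=1$, and both verify that pseudo-Frobenius-ness of $-n$ leaves exactly the bullets $\ee_1,\ldots,\ee_k$ (the paper citing Lemma~\ref{l:subsume} where you spell the length-$\ge 2$ exclusion out by hand). The only cosmetic difference is that you arrange the implications as the single cycle (i)$\Rightarrow$(iii)$\Rightarrow$(ii)$\Rightarrow$(i) rather than the paper's pair of equivalences (i)$\Leftrightarrow$(ii) and (ii)$\Leftrightarrow$(iii).
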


\begin{proof}
We show (i)${}\Leftrightarrow{}$(ii) and (ii)${}\Leftrightarrow{}$(iii).  

(i)${}\implies{}$(ii): Suppose $\omega(n) = 1$.  For each $i \in \{1, \ldots, k\}$, we have $b_i \ee_i \in \bul(n)$ for some $b_i \ge 0$ since numerical monoids are Archimedian, and by Proposition~\ref{p:omegazero}, $b_i > 0$.  Since $\omega(n) = 1$, each $b_i = 1$.  

(ii)${}\implies{}$(i): This follows from Proposition~\ref{p:bulletext}.  

(ii)${}\implies{}$(iii): Suppose $\bul(n) = \{\ee_1, \ldots, \ee_k\}$.  For each $i \in \{1, \ldots, k\}$, we have $n_i - n \in S$ and $n_i - n - n_i = -n \notin S$, meaning $-n$ is a pseudo-Frobenius number of $S$.  

(iii)${}\implies{}$(ii): If $-n$ is a psuedo-Frobenius number of $S$, then $-n \notin S$ and $n_i - n \in S$ for each $i \in \{1, \ldots, k\}$, so $\ee_i \in \bul(n)$.  Lemma~\ref{l:subsume} ensures no other bullets exist.  
\end{proof}

\begin{remark}
Bullets in numerical monoids are usually denoted either as tuples or as a list of irreducibles, the former of which is well-suited for storage when implementing algorithms.  However, for the purposes of implementing Algorithm~\ref{a:dynamicomega}, each bullet $\aa = (a_1, \ldots, a_k) \in \bul(n)$ can be represented using only its value $v = a_1n_1 + \cdots + a_kn_k$ and length $\ell = a_1 + \cdots + a_k$ (a \emph{dynamic bullet}, see Definition~\ref{d:bulletdyn}).  Indeed, $v$ is sufficient for determining the image of $\aa$ under cover maps, and $\ell$ is sufficient for computing $\omega(n)$ once all bullets have been found.  This is the content of Proposition~\ref{p:dynamicomega}.  

Since several distinct bullets can be represented by the same dynamic bullet, the cardinality of the computed bullet set with this representation is significantly reduced.  Much like dynamically computing length sets instead of factorization sets greatly improves efficiency when computating delta sets (Remark~\ref{r:deltacomparison}), using this compact bullet representation significantly reduces the runtime and memory footprint when dynamically computing $\omega$-values.  
\end{remark}

Proposition~\ref{p:dynamicomega} is the analog of Lemmas~\ref{l:dynamicfactor} and~\ref{l:dynamiclengths} for Algorithm~\ref{a:dynamicomega}.  First and foremost, parts~(b) and~(c) prove the correctness of Algorithm~\ref{a:dynamicomega} by ensuring that dynamic bullets (Definition~\ref{d:bulletdyn}) are sufficient for computing $\omega$-values.  Moreover, part~(a) provides a natural starting place for the inductive procedure in Algorithm~\ref{a:dynamicomega}.  Lastly, part~(d) proves that we may further restrict our attention to maximal dynamic bullets, and Corollary~\ref{c:constanttime} demonstrates the benefit of this reduction (see Remark~\ref{r:constanttime}).  

\begin{defn}\label{d:bulletdyn}
Fix a numerical monoid $S = \<n_1, \ldots, n_k\>$ and an element $n \in \mathsf q(S)$.  A \emph{dynamic bullet for $n$} is an ordered pair $(v,\ell) \in \NN^2$ such that $v = \sum_{i = 1}^k b_in_i$ and $\ell = \sum_{i = 1}^k b_i$ are the value and length of some $\bb \in \bul(n)$, respectively.  The set of dynamic bullets of $n$ is denoted $\bul^*(n)$.  The \emph{value} and \emph{length} of a dynamic bullet $(v,\ell) \in \bul^*(n)$ are the values $v$ and $\ell$, respectively, and $(v,\ell)$ is \emph{maximal} if $\ell$ is maximal among dynamic bullets with value $v$.  
\end{defn}

\begin{prop}\label{p:dynamicomega}
Fix a numerical monoid $S = \<n_1, \ldots, n_k\>$, and fix $i \in \{1, \ldots, k\}$ and $n \in~\mathsf q(S)$.  Let $\psi:\bul(n) \to \bul(n + n_i)$ denote the $n_i$-cover map, $\phi_n:\bul(n) \to \bul^*(n)$ denote the map given by
$$\bb \in \bul(n) \longmapsto \left(\textstyle\sum_{i = 1}^k b_in_i, \sum_{i = 1}^k b_i\right) \in \bul^*(n)$$
and $\psi^*:\bul^*(n) \to \bul^*(n + n_i)$ denote the map given by
$$(v,\ell) \in \bul^*(n) \mapsto \left\{\begin{array}{ll}
(v, \ell) & \text{if } v - (n_i + n) \in S \\
(v + n_i, \ell + 1) & \text{if } v - (n_i + n) \notin S.
\end{array}\right.$$
\begin{enumerate}
\item[(a)] If $n < -F(S)$, then $\bul(n) = \{\mathbf 0\}$.  
\item[(b)] For all $n \in \mathsf q(S)$, $\omega(n) = \max\{w : (v,w) \in \bul^*(n)\}$.  
\item[(c)] For all $n \in \mathsf q(S)$ and $i \in \{1, \ldots, k\}$, the following diagram commutes: 
$$\arraycolsep=1.4pt\begin{array}{ccc}
\bul(n) &\xrightarrow{\qquad\psi\qquad}& \bul(n + n_i) \\
\left\downarrow \rule{0pt}{0.4in} {}^{\phi_n} \right. && \left\downarrow \rule{0pt}{0.4in} {}^{\phi_{n+n_i}} \right. \\
\bul^*(n) &\xrightarrow{\qquad\psi^*\qquad}& \bul^*(n + n_i)
\end{array}$$
\item[(d)] A dynamic bullet $(v,\ell)$ is maximal if and only if $\psi^*(v,\ell)$ is maximal.  
\end{enumerate}
\end{prop}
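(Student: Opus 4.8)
The plan is to handle the four parts in turn: (a) and (b) are short consequences of Section~\ref{s:omegaext}, (c) is the inductive step that transports cover maps to dynamic bullets, and (d) records its consequence for maximal dynamic bullets. For (a): if $n < -F(S)$ then $-n > F(S) = \max(\NN \minus S)$, hence $-n \in S$, and Proposition~\ref{p:omegazero} (the equivalence (iv)$\iff$(ii), taken with $M = S$) gives $\bul(n) = \{\mathbf 0\}$ at once; in particular $\omega(n) = 0$ there, while $-(-F(S)) = F(S) \notin S$ shows via the same proposition that $\bul(-F(S)) \ne \{\mathbf 0\}$, so $-F(S)$ is genuinely the first place $\omega$ becomes nonzero, i.e.\ the base case for the algorithm. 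For (b): Proposition~\ref{p:bulletext} identifies $\omega(n)$ with the supremum of the lengths of the bullets of $n$; the map $\phi_n$ is, by construction, a surjection $\bul(n) \onto \bul^*(n)$ sending a length-$w$ bullet to a length-$w$ dynamic bullet, so that supremum equals $\sup\{w : (v,w) \in \bul^*(n)\}$, and it is attained because $\bul(n)$, hence $\bul^*(n)$, is finite for a numerical monoid (deleting from a bullet for $n$ the smallest generator occurring in it must leave something outside $S$, which bounds the value of the bullet by $F(S) + n_k + n$).

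For (c), I would argue directly from Definition~\ref{d:covermaps}. Fix $\bb \in \bul(n)$ and set $(v,\ell) = \phi_n(\bb)$, so $v = \sum_j b_j n_j$ and $\ell = \sum_j b_j$. If $v - (n_i + n) \in S$, then $\psi(\bb) = \bb$, a bullet of $n + n_i$ of value $v$ and length $\ell$, whence $\phi_{n+n_i}(\psi(\bb)) = (v,\ell) = \psi^*(v,\ell)$; if $v - (n_i + n) \notin S$, then $\psi(\bb) = \bb + \ee_i$, of value $v + n_i$ and length $\ell + 1$, whence $\phi_{n+n_i}(\psi(\bb)) = (v + n_i, \ell + 1) = \psi^*(v,\ell)$. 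So the square commutes. Since Theorem~\ref{t:covermaps} guarantees $\psi(\bb) \in \bul(n + n_i)$, its image $\phi_{n+n_i}(\psi(\bb)) = \psi^*(v,\ell)$ is a dynamic bullet of $n + n_i$; because $\phi_n$ is onto, this is exactly the assertion that $\psi^*$ carries $\bul^*(n)$ into $\bul^*(n + n_i)$, so the map in the statement is well defined.

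For (d) the governing observation is that the branch of $\psi^*$ taken at $(v,\ell)$ depends only on $v$, through the single test $v - (n_i + n) \in S$: all dynamic bullets of $n$ of a given value $v$ are therefore sent to dynamic bullets of a common value $v^* \in \{v, v + n_i\}$, with all their lengths shifted by the same $c \in \{0,1\}$. One implication of (d) is then immediate by contraposition: if $(v,\ell) \in \bul^*(n)$ is not maximal, choose $(v,\ell') \in \bul^*(n)$ with $\ell' > \ell$; by (c), $\psi^*(v,\ell') \in \bul^*(n + n_i)$, it has the same value as $\psi^*(v,\ell)$, and its length $\ell' + c$ exceeds $\ell + c$, so $\psi^*(v,\ell)$ is not maximal. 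This direction — that $\psi^*(v,\ell)$ maximal forces $(v,\ell)$ maximal — is the one that licenses Algorithm~\ref{a:dynamicomega} to carry forward only the maximal dynamic bullets: by Corollary~\ref{c:bulletscovered}, every maximal dynamic bullet of $n + n_i$ is a $\psi^*$-image of a dynamic bullet of some $n + n_i - n_j$, and that preimage must itself be maximal.

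The reverse implication, $(v,\ell)$ maximal $\Rightarrow \psi^*(v,\ell)$ maximal, is the delicate step and the one I expect to be the main obstacle; I would begin by checking carefully whether it needs a side hypothesis on the value range of $(v,\ell)$. Absent that, the approach is: suppose $\psi^*(v,\ell)$ is not maximal, so some bullet $\cc$ of $n + n_i$ has the same value as $\psi^*(v,\ell)$ but strictly greater length; pull $\cc$ back through the relevant cover map (Corollary~\ref{c:bulletscovered}) to a bullet of some $n + n_i - n_j$, and argue against the maximality of $(v,\ell)$, invoking Lemma~\ref{l:subsume} to control which generators can appear in a bullet and the injectivity of the cover maps to track lengths. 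Matching value classes correctly across the two branches of $\psi^*$ and across the choice of $n_j$ is where the real care is needed; this case analysis is the crux of the argument.
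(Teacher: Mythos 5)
Your treatment of (a), (b) (including the finiteness/attainment argument), and (c) is correct and is essentially the paper's own, much terser, argument.  The interesting point is (d).  The implication you do prove --- $\psi^*(v,\ell)$ maximal forces $(v,\ell)$ maximal, via the observation that the branch of $\psi^*$ depends only on $v$, so a whole value class of $\bul^*(n)$ is shifted to a common value with lengths translated by a common constant --- is correct, and, as you note, it is exactly what Algorithm~\ref{a:dynamicomega} needs through Corollary~\ref{c:bulletscovered}.

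The direction you leave open is a gap in your write-up, but your instinct that it might ``need a side hypothesis'' is vindicated: as stated it is false, so do not try to close the gap.  Take $S = \<6,9,20\>$, $n = 54$, $n_i = 6$.  From Example~\ref{e:covermaps}, $(0,0,3) \in \bul(54)$ gives the dynamic bullet $(60,3) \in \bul^*(54)$, which is maximal because it is the \emph{only} dynamic bullet of $54$ with value $60$ (the other four all have value $54$).  Since $60 - (6+54) = 0 \in S$, we get $\psi^*(60,3) = (60,3) \in \bul^*(60)$; but $(10,0,0) \in \bul(60)$ yields $(60,10) \in \bul^*(60)$, so $(60,3)$ is not maximal there.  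The failure mode is precisely the one you flagged: the value classes $v = 54$ and $v = 60$ of $\bul^*(54)$ collide in $\bul^*(60)$, because one takes the $(v,\ell)\mapsto(v+n_i,\ell+1)$ branch and the other the identity branch (this can only happen when the bullet of value $v$ omits the generator $n_i$, which is exactly what $(0,0,3)$ does).  Note that the paper's own one-line justification of (d) --- that $\psi^*$ preserves the ordering of lengths among dynamic bullets \emph{sharing a value} --- only rules out competition within a single value class, and hence only establishes the implication you proved.  The correct resolution is to record just that one implication; it is all that is needed, since every maximal dynamic bullet of $n+n_i$ is, by Corollary~\ref{c:bulletscovered}, a $\psi^*$-image of some dynamic bullet of some $n + n_i - n_j$, and that preimage must then be maximal.
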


\begin{proof}
First, Theorem~\ref{t:covermaps} ensures that the map $\psi^*$ is well-defined upon observing that the value and length of the image of a bullet $\bb$ under $\psi$ depend only on the value and length of $\bb$.  Now, parts~(a) and~(b) follow directly from Propositions~\ref{p:omegazero} and~\ref{p:bulletext}, respectively, and part~(c) follows immediately from Definitions~\ref{d:covermaps} and~\ref{d:bulletdyn}.  Lastly, part~(d) follows from part~(c) and the observation that applying $\psi^*$ to dynamic bullets $(v,\ell_1), (v, \ell_2) \in \bul^*(n)$ preserves any ordering on the second component.  
\end{proof}

\begin{alg}\label{a:dynamicomega}
Given $n \in S = \<n_1, \ldots, n_k\>$, computes $\omega(m)$ for all $m \in [0, n] \cap S$.  
\begin{algorithmic}
\Function{OmegaPrimalityUpToElement}{$S$, $n$}
\State $B_m \gets \{(0,0)\}$ for all $m < -F(S)$
\ForAll{$m \in \{-F(S), \ldots, n\}$}
	\State $B_m \gets \{\}$
	\ForAll{$i \in \{1, 2, \ldots, k\}$}
		\ForAll{$(v, \ell) \in B_{m - n_i}$}
			\State $B_m \gets B_m \cup 
			\left\{\begin{array}{ll}
			\{(v, \ell)\} & \text{if } v - (n_i + m) \in S \\
			\{(v + n_i, \ell + 1)\} & \text{otherwise}
			\end{array}\right.$
		\EndFor
	\EndFor
	\State $B_m \gets \{(v,\ell) \in B_m : (v,\ell) \text{ is maximal}\}$
	\If{$m \in S$}
		\State $\omega[m] \gets \max\{\ell : (v, \ell) \in B_m\}$
	\EndIf
\EndFor
\State \Return $\omega$
\EndFunction
\end{algorithmic}
\end{alg}

\begin{remark}
Table~\ref{tb:omegafactor} gives a runtime comparison between Algorithm~\ref{a:dynamicomega} and \cite[Proposition~3.3]{semitheor}, the algorithm previously implemented in the GAP package \texttt{numericalsgps}.  Not only is Algorithm~\ref{a:dynamicomega} significantly faster, it also computes an extensive list of $\omega$-values, rather than just a single value.  
\end{remark}

\section{Computing $\omega$-primality from factorizations}\label{s:omegabound}

There are many similarities between Algorithms~\ref{a:dynamicfactor} and~\ref{a:dynamicomega} beyond their dynamic nature.  In non-precise terms, bullet sets and factorization sets seem to behave in a very similar manner.  In this section, we make this connection explicit for numerical monoids $S$ by characterizing the bullet set $\bul(n)$ of any element $n \in S$ in terms of factorization sets of certain submonoids of $S$.  As an application, we greatly improve the existing bound \cite{omegaquasi} on the start of quasilinear behavior (Theorem~\ref{t:omegaquasi}) of the $\omega$-function on $S$; see Theorem~\ref{t:newomegabound}.  

\subsection{Runtime of Algorithm~\ref{a:dynamicomega}}

We first provide the definition of an Ap\'ery set, which will be utilized in Theorem~\ref{t:omegafactor} below.  See~\cite{numerical} for a full treatment of Ap\'ery sets.  

\begin{defn}\label{d:apery}
Let $M$ be a monoid.  The \emph{Ap\'ery set} of $x \in M$ is defined as 
$$\Ap(M,x) = \{m \in M : m-x \in \qq(M) \setminus M\}.$$
\end{defn}

\begin{thm}\label{t:omegafactor}
Fix a reduced finitely generated monoid $M$, and let $G = \{u_1, \ldots, u_k\}$ denote the set of irreducible elements of $M$.  For $A \subset G$ nonempty and $x \in M$, write 
$$\mathsf Z_A(x) = \{\aa \in \mathsf Z(x) : a_i = 0 \text{ whenever } u_i \notin A\} \subset \mathsf Z(x),$$
that is, the factorizations in $\mathsf Z_M(x)$ corresponding to factorizations in $\mathsf Z_{\<A\>}(x)$.  Then 
$$\bul(x) = \left\{\bb \in \mathsf Z_{A}(y + x) : \emptyset \ne A \subset G \text{ and } y \in \textstyle\bigcap_{u_i \in A} \Ap(M;u_i) \right\}$$
for all $x \in \mathsf q(M)$ with $\omega(x) > 0$.  
\end{thm}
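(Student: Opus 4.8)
The plan is to prove the two inclusions separately, using the characterization of bullets in Definition~\ref{d:bulletext} throughout. Fix $x \in \mathsf q(M)$ with $\omega(x) > 0$.

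\textbf{Bullets are of the stated form.} Let $\bb = (b_1,\dots,b_k)$ represent a bullet $u_1+\cdots+u_r \in \bul(x)$ (using the tuple notation of Remark~\ref{r:bullettuple}), and set $A = \{u_i \in G : b_i \ne 0\}$. Since $\omega(x) > 0$, Proposition~\ref{p:omegazero} rules out $\bb = \mathbf 0$, so $A$ is nonempty. Let $v = \sum_i b_i u_i \in M$ be the value of the bullet and put $y = v - x \in \mathsf q(M)$; condition~(i) of Definition~\ref{d:bulletext} says $y \in M$. By construction $\bb \in \mathsf Z_A(v) = \mathsf Z_A(y + x)$. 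It remains to check $y \in \Ap(M; u_i)$ for each $u_i \in A$, i.e. $y - u_i \in \mathsf q(M) \setminus M$. But $y - u_i = v - x - u_i = (u_1+\cdots+u_r) - x - u_i$, which is not in $M$ precisely by condition~(ii) of Definition~\ref{d:bulletext} (here we use $b_i \ne 0$, so $u_i$ is among the summands). Hence $\bb$ lies in the right-hand set.

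\textbf{Elements of the stated form are bullets.} Conversely, suppose $\emptyset \ne A \subset G$, $y \in \bigcap_{u_i \in A}\Ap(M; u_i)$, and $\bb \in \mathsf Z_A(y + x)$. Write $\bb$ as $u_1 + \cdots + u_r$ with value $v = \sum_i b_i u_i = y + x$, so $v - x = y \in M$: condition~(i) holds. For condition~(ii), fix a summand $u_i$; since $\bb \in \mathsf Z_A(v)$, the irreducible $u_i$ actually appears, so $u_i \in A$ and therefore $y \in \Ap(M; u_i)$, i.e. $y - u_i \notin M$. Then $v - x - u_i = y - u_i \notin M$, which is condition~(ii). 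Thus $\bb \in \bul(x)$, completing the reverse inclusion.

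\textbf{Main obstacle.} The argument itself is a direct unwinding of definitions, so the only real care needed is bookkeeping: translating fluidly between the tuple representation $\bb \in \NN^k$ and the ``expression'' representation $u_1 + \cdots + u_r$, and making sure that the support condition defining $\mathsf Z_A$ matches up exactly with ``$u_i$ occurs as a summand,'' so that the Ap\'ery-set conditions on $y$ correspond precisely to the minimality condition~(ii) for those $i$ with $b_i \ne 0$ (and impose nothing for the other $i$, consistent with condition~(ii) being vacuous-by-absence there — note condition~(ii) of Definition~\ref{d:bulletext} ranges over $i \le r$, i.e. exactly the summands). One should also double-check that the union over all nonempty $A$ on the right does not accidentally produce non-bullets from a ``too small'' $A$: if $\bb \in \mathsf Z_A(y+x)$ but $\supp(\bb) \subsetneq A$, the argument above still only invokes $y \in \Ap(M; u_i)$ for $u_i \in \supp(\bb) \subset A$, so condition~(ii) is verified for all summands and $\bb$ is genuinely a bullet — the redundancy is harmless. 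I expect no deeper difficulty; Definition~\ref{d:apery} and Definition~\ref{d:bulletext} are designed so that this matching is essentially tautological.
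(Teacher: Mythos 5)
Your proposal is correct and follows essentially the same argument as the paper: both directions are the same direct unwinding of Definition~\ref{d:bulletext} and Definition~\ref{d:apery}, with $A$ taken to be the support of the bullet and $y$ its value minus $x$, and with Proposition~\ref{p:omegazero} supplying nonemptiness of $A$. Your extra remark that a ``too large'' $A$ causes no harm is a point the paper's proof leaves implicit, but it is not a substantive difference.
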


\begin{proof}
First, fix $A \subset G$ nonempty, $y \in \bigcap_{u_i \in A} \Ap(M;u_i)$, and $\bb \in \mathsf Z_A(y+x)$.  It follows that $\sum_{j=1}^k b_ju_j - x = y \in M$, and 
$$\sum_{j=1}^k b_ju_j - (u_i + x) = y - u_i \notin M$$
for each $u_i \in A$, so $\bb \in \bul(x)$.  

Now, fix $\bb \in \bul(x)$.  Let $A = \{u_i : b_i \ne 0\} \subset G$ and $y = \sum_{j=1}^k b_ju_j - x$.  Since $\omega(x) > 0$, $A$ is nonempty by Proposition~\ref{p:omegazero}.  Rearranging the equation for $y$ gives $y + x = \sum_{j=1}^k b_ju_j$, meaning $\bb \in \mathsf Z_A(y + x)$, and since $y \in M$ and $y - u_i \notin M$ for $b_i \ne 0$, we have $y \in \bigcap_{u_i \in A} \Ap(M;u_i)$.  
\end{proof}

\begin{example}\label{e:omegafactor}
Let $S = \<6,9,20\>$ and $n = 60 \in S$.  Our goal is to compute $\bul(60)$ using Theorem~\ref{t:omegafactor}.  For any subset $A \subset \{6,9,20\}$ such that $\gcd(A) > 1$ and $60 \in \<A\>$, each $m \in \bigcap_{a \in A} \Ap(S;a)$ is relatively prime to $\gcd(A)$, so $\mathsf Z_A(60 + m) = \emptyset$.  The only subsets not satisfying both of these conditions are $A = \{9\}$ and $A = \{6,9,20\}$.  Since $\Ap(S;6) \cap \Ap(S;9) \cap \Ap(S;20) = \{0\}$ and the only nonzero $m \in \Ap(S;9)$ with $\mathsf Z_{\{9\}}(60 + m)$ nonempty is $m = 12$, we compute $\bul(60)$ as follows: 
$$\begin{array}{rcl}
\bul(60)
&=& \mathsf Z_{\{6,9,20\}}(60 + 0) \cup \mathsf Z_{\{9\}}(60 + 12) \\
&=& \{(4,4,0),(7,2,0),(10,0,0),(1,6,0),(0,0,3)\} \cup \{(0,8,0)\}. \\
\end{array}$$
\end{example}

\begin{table}
\begin{tabular}{|l|l|l|l|l|}
$S$ & $n$ & $\omega(n)$ & \cite[Prop.~3.3]{semitheor} & Algorithm~\ref{a:dynamicomega} \\
\hline
$\<6,9,20\>$ & $1000$ & 170 & 61319 ms & 6 ms \\
$\<11,13,15\>$ & $1000$ & 97 & 10732 ms & 5 ms \\
$\<11,13,15\>$ & $3000$ & 279 & 874799 ms & 15 ms \\
$\<11,13,15\>$ & $10000$ & 915 & -------- & 42 ms \\
$\<15,27,32,35\>$ & $1000$ & 69 & 234738 ms & 9 ms \\
$\<10,12,15,16,17\>$ & $500$ & 52 & 14338290 ms & 40 ms \\
$\<10,12,15,16,17\>$ & $50000$ & 5002 & -------- & 4084 ms \\
$\<100,121,142,163,284\>$ & $25715$ & 308 & -------- & 27449 ms \\
$\<1001,1211,1421,1631,2841\>$ & $357362$ & 405 & -------- & 3441508 ms \\
\end{tabular}
\medskip
\caption{Runtime comparison for computing the $\omega$-value of $n \in \mathsf q(S)$.  All computations performed using the GAP package \texttt{numericalsgps} \cite{numericalsgps} and do not make use of eventual quasilinearity.}
\label{tb:omegafactor}
\end{table}

\begin{remark}\label{r:factoromegacompare}
For each element $n$ in a numerical monoid $S$, Theorem~\ref{t:omegafactor} gives a way to compute the set $\bul(n)$ (and thus $\omega(n)$) by computing factorizations of elements in certain submonoids of $S$.  Since factorization sets in numerical monoids can be computed rather quickly in GAP \cite{numericalsgps}, this yields an algorithm for computing $\omega$-primality.  This turns out to be faster than \cite[Proposition~3.3]{semitheor}, the algorithm widely used to compute $\omega$-primality in numerical monoids prior to Algorithm~\ref{a:dynamicomega}, but not as fast as Algorithm~\ref{a:dynamicomega}, as it still relies on computing $\bul(n)$, rather than the (substantially smaller) set $\bul^*(n)$.  Additionally, algorithmic use of Theorem~\ref{t:omegafactor} only produces a single $\omega$-value, whereas Algorithm~\ref{a:dynamicomega} produces an exhaustive list of $\omega$-values.   
\end{remark}

\begin{remark}\label{r:constanttime}
One immediate consequence of Theorem~\ref{t:omegafactor} is that the number of distinct values occuring for dynamic bullets in $\bul^*(n)$ is at most $|\bigcup_{i = 1}^k \Ap(S;n_i)|$.  In particular, this bound does not depend on $n$.  Since Algorithm~\ref{a:dynamicomega} only stores a single dynamic bullet with each value (Proposition~\ref{p:dynamicomega}d), the inductive step runs in constant time in~$n$ (for fixed $S$).  This yields Corollary~\ref{c:constanttime}.  
\end{remark}

\begin{cor}\label{c:constanttime}
For fixed $S$, Algorithm~\ref{a:dynamicomega} runs in linear time in $n$.  
\end{cor}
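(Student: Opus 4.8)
The plan is to show that Algorithm~\ref{a:dynamicomega} performs only $O(1)$ work (for fixed $S$) per iteration of its outer loop, and that the outer loop runs $O(n)$ times; combining these gives the linear bound. The outer loop ranges over $m \in \{-F(S), \ldots, n\}$, a total of $n + F(S) + 1 = O(n)$ iterations (an access to $B_{m - n_i}$ with $m - n_i < -F(S)$ simply returns the initialized value $\{(0,0)\}$ in constant time). So the whole argument reduces to bounding, uniformly in $n$, the sizes of the sets $B_m$ maintained by the algorithm.

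Set $c = \big|\bigcup_{i=1}^{k} \Ap(S;n_i)\big|$, a finite quantity depending only on $S$, and note $c \ge 1$ since $0 \in \Ap(S;n_i)$ for every $i$. I claim that, after the maximality filter has been applied, $|B_m| \le c$ for every $m$. First, a routine induction on $m$ using the commuting square of Proposition~\ref{p:dynamicomega}(c) shows $B_m \subseteq \bul^*(m)$: each pair the inner loop appends to $B_m$ is $\psi^*(v,\ell)$ for some $(v,\ell) \in B_{m-n_i} \subseteq \bul^*(m-n_i)$, and $\psi^*$ maps $\bul^*(m-n_i)$ into $\bul^*(m)$; the base values $B_m = \{(0,0)\}$ for $m < -F(S)$ also lie in $\bul^*(m)$. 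Second, after the filter $B_m$ retains at most one pair of each value, so it suffices to bound the number of distinct values occurring among dynamic bullets of $m$. If $\omega(m) = 0$ then $\bul^*(m) = \{(0,0)\}$ by Proposition~\ref{p:omegazero}, so there is a single value; if $\omega(m) > 0$, then by Theorem~\ref{t:omegafactor} the value $v = \sum_j b_j n_j$ of any bullet $\bb \in \bul(m)$ equals $y + m$ for some nonempty $A \subseteq \{n_1,\ldots,n_k\}$ and $y \in \bigcap_{n_i \in A} \Ap(S;n_i)$, whence $v - m \in \bigcup_{i=1}^{k} \Ap(S;n_i)$. Hence at most $c$ distinct values occur, and $|B_m| \le c$.

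With the bound $|B_m| \le c$ in hand, the per-iteration cost is immediate. At step $m$, for each $i \in \{1,\ldots,k\}$ the inner loop processes the at most $c$ pairs in $B_{m-n_i}$, doing $O(1)$ work on each (one test $v - (n_i + m) \in S$, constant-time for fixed $S$, and one insertion), so $B_m$ has size at most $kc$ before filtering. Discarding non-maximal pairs, and reading off $\omega[m] = \max\{\ell : (v,\ell) \in B_m\}$ when $m \in S$, then take $O((kc)^2) = O(1)$ additional work. Thus each outer iteration costs $O(1)$ for fixed $S$, and the total running time is $O(n)$, i.e.\ linear in $n$.

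The only genuinely substantive step is the uniform bound $|B_m| \le c$: absent Theorem~\ref{t:omegafactor} (equivalently, the observation of Remark~\ref{r:constanttime}) one would know merely that each $B_m$ is finite, with no control over how its size grows with $m$. It is precisely the containment of all bullet values of $m$ in the finite translate $m + \bigcup_{i} \Ap(S;n_i)$ that forces $|B_m|$ to be bounded independently of $n$; the surrounding loop count is entirely mechanical.
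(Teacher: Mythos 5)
Your argument is correct and follows the same route as the paper: the substantive point is exactly the one recorded in Remark~\ref{r:constanttime}, namely that Theorem~\ref{t:omegafactor} confines all bullet values of $m$ to the translate $m + \bigcup_{i=1}^k \Ap(S;n_i)$, so that after the maximality filter of Proposition~\ref{p:dynamicomega}(d) each $B_m$ has size bounded by a constant depending only on $S$. Your write-up merely makes explicit the mechanical loop-counting that the paper leaves implicit; no new idea or gap is involved.
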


\subsection{Bounding the eventual quasilinearity of $\omega$-primality}

For numerical monoids, the $\omega$-function admits a predictable behavior for sufficiently large values, a result that appeared as \cite[Theorem~3.6]{omegaquasi} and independently as~\cite[Corollary~20]{compasympomega}.  Before stating this result here as Theorem~\ref{t:omegaquasi}, we give a definition.  

\begin{defn}\label{d:quasi}
A map $f: \NN \to \NN$ is \emph{quasilinear} if $f(n) = a_1(n) n + a_0(n)$, 
where $a_1, a_2: \NN \to \QQ$ are each periodic and $a_1(n)$ is not identically zero.  
\end{defn}

\begin{thm}\cite[Theorem~3.6]{omegaquasi}\label{t:omegaquasi}
Fix a numerical monoid $S = \<n_1, \ldots, n_k\>$.  The $\omega$-primality function is eventually quasilinear.  In particular, there is a periodic function $a(n)$ with period $n_1$  and integer $N_0$ such that for $n > N_0$, 
$$\omega_S(n) = \frac{1}{n_1}n + a(n).$$
\end{thm}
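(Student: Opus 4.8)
The plan is to establish the asymptotic formula $\omega_S(n) = \frac{1}{n_1}n + a(n)$ by a sandwiching argument, using Proposition~\ref{p:bulletext} to reduce everything to bullet lengths, and then using Theorem~\ref{t:omegafactor} (the bullet/factorization dictionary) to pin down which bullets can have large length. First I would recall the lower bound: for any $n \in S$, writing $n = cn_1 + s$ with $0 \le s < n_1$ and $s$ chosen so that $s \in S$ (possible once $n$ is large, since $S$ is cofinite), the factorization $(c, 0, \ldots, 0) + (\text{factorization of }s)$ can be refined to a bullet for $n$ by repeatedly adding copies of $n_1$; more carefully, I would argue directly that there is a bullet of $n$ of length at least $\frac{1}{n_1}n - O(1)$, so $\omega_S(n) \ge \frac{1}{n_1}n + O(1)$. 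Intuitively, a bullet using only the generator $n_1$ as much as possible has length close to $n/n_1$, and the error is absorbed into the eventually-periodic term.

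Next I would prove the matching upper bound, which is the crux of the argument. By Proposition~\ref{p:bulletext}, $\omega_S(n)$ is the maximal length of a bullet $\bb \in \bul(n)$. By Theorem~\ref{t:omegafactor}, every such bullet is a factorization in $\mathsf Z_A(y+n)$ for some nonempty $A \subset \{n_1,\ldots,n_k\}$ and some $y \in \bigcap_{n_i \in A}\Ap(S; n_i)$; crucially, the set of possible $y$ is finite (a subset of $\bigcup_i \Ap(S;n_i)$) and independent of $n$. For a fixed such $A$ and $y$, a factorization $\bb$ of $y+n$ using only the generators in $A$ has length at most $\frac{1}{\min A}(y + n) \le \frac{1}{n_1}(y+n)$, with equality-type behavior forcing $\bb$ to be dominated by copies of the smallest generator. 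So all bullets have length $\frac{1}{n_1}n + O(1)$, and combined with the lower bound this already gives $\omega_S(n) = \frac{1}{n_1}n + O(1)$.

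To upgrade $O(1)$ to an honest periodic function of period $n_1$, I would show that the maximal bullet length, as a function of $n$, is eventually $n_1$-periodic after subtracting the linear term. The cleanest route is via the cover-map machinery: Corollary~\ref{c:bulletscovered} and Proposition~\ref{p:dynamicomega} show $\bul^*(n)$ is obtained from the $\bul^*(n-n_i)$ by applying the maps $\psi^*$, and by Remark~\ref{r:constanttime} the number of distinct \emph{values} occurring in $\bul^*(n)$ is bounded independently of $n$. Tracking the value $v$ of a maximal dynamic bullet, one has $v \equiv n \pmod{?}$ constraints and $v - n$ lying in a fixed finite set; the function $n \mapsto \max\{\ell : (v,\ell)\in\bul^*(n)\} - n/n_1$ then takes finitely many values and satisfies an eventually-stabilizing recursion indexed by $n \bmod n_1$ (only $n_1$ matters because adding $n_1$ to $n$ and to the value of a $\psi^*$-image keeps the ``$v-n$'' offset in the same residue class and the same finite set). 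Hence the adjusted function is eventually periodic with period dividing $n_1$, and a direct check that it is not identically causing $a_1 \equiv 0$ (the coefficient $1/n_1$ is manifestly nonzero) finishes the argument. The main obstacle will be making the period-$n_1$ claim precise rather than merely ``eventually periodic with some period'': one must verify that the relevant offsets $y = v - n$ are constrained modulo $n_1$ in a way that synchronizes with the shift $n \mapsto n + n_1$, which is exactly where the finiteness of $\bigcup_i \Ap(S;n_i)$ and the structure of $\Ap(S;n_1)$ (a complete residue system mod $n_1$) are used.
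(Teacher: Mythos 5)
The paper does not actually prove this statement---it is quoted from \cite{omegaquasi} as background---so the fair comparison is with Section~\ref{s:omegabound}, where the authors effectively re-derive the quasilinearity with a sharp threshold (Lemma~\ref{l:newomegabound} and Theorem~\ref{t:newomegabound}). Your first two steps coincide with that route. The lower bound $\omega(n)\ge n/n_1$ is Lemma~\ref{l:newomegabound}, and it is cleaner than your sketch suggests: take the least $b_1$ with $b_1n_1-n\in S$, so that $b_1\ee_1\in\bul(n)$ and $b_1\ge n/n_1$ exactly; your decomposition $n=cn_1+s$ with $0\le s<n_1$ and $s\in S$ forces $s=0$ and is not needed. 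The upper bound via Theorem~\ref{t:omegafactor}, using that the offsets $y$ range over the finite set $\bigcup_i\Ap(S;n_i)$ independent of $n$, correctly yields $\omega(n)=n/n_1+O(1)$.

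The gap is in your last step. ``Finitely many states plus a recursion'' only yields eventual periodicity with \emph{some} period, whereas the theorem asserts period exactly $n_1$, i.e.\ that $a(n)=\omega(n)-n/n_1$ is literally $n_1$-periodic beyond $N_0$; your synchronization argument for the offsets $v-n$ is not carried out, as you acknowledge. The missing idea is concrete and is exactly Theorem~\ref{t:newomegabound}: for $n>N_0=(F(S)+n_2)/(n_2/n_1-1)$, any bullet $\bb\in\bul(n)$ with $b_1=0$ has, by Theorem~\ref{t:omegafactor}, length at most $(n+F(S))/n_2+1<n/n_1\le\omega(n)$, so every \emph{maximal} bullet contains a copy of $n_1$. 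Removing that copy (Theorem~\ref{t:bulletscovered}) gives $\omega(n-n_1)\ge\omega(n)-1$, and applying the $n_1$-cover map to a maximal bullet of $n-n_1$ (Theorem~\ref{t:covermaps}(ii), whose hypothesis holds because the value $v$ of such a bullet satisfies $v-(n-n_1)\in\Ap(S;n_1)$) gives $\omega(n)\ge\omega(n-n_1)+1$. The resulting exact recursion $\omega(n)=\omega(n-n_1)+1$ is precisely the $n_1$-periodicity of $a$. With that substitution your outline becomes a complete proof, and indeed one with a far better $N_0$ than the original argument in \cite{omegaquasi}.
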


\begin{remark}\label{r:quasi}
The value of $N_0$ described in Theorem~\ref{t:omegaquasi}, which gives an upper bound for the start of the quasilinear behavior of $w$, is explicitly given in ~\cite{omegaquasi}.  The actual start of the quasilinear behavior, which is called the \emph{dissonance}, is very often significantly lower.  Theorem~\ref{t:newomegabound} will greatly improve the bound for the dissonance.
\end{remark}

\begin{example}\label{e:omegaquasi}
Figure~\ref{f:omegaquasi} plots the $\omega$-values for the numerical monoids $\<4,13,19\>$ and $\<6,9,20\>$.  The eventual quasilinearity ensured by Theorem~\ref{t:omegaquasi} manifests graphically as a collection of discrete lines with identical slopes, and is readily visible from these plots.  The proof of Theorem~\ref{t:omegaquasi} appearing in \cite{omegaquasi} gives a precise (but computationally impractical) bound on the start of this quasilinear behavior, and Theorem~\ref{t:newomegabound} below drastically improves this bound; see Table~\ref{tb:newomegabound} for a detailed comparison.  
\end{example}

\begin{figure}
\includegraphics[width=2.5in]{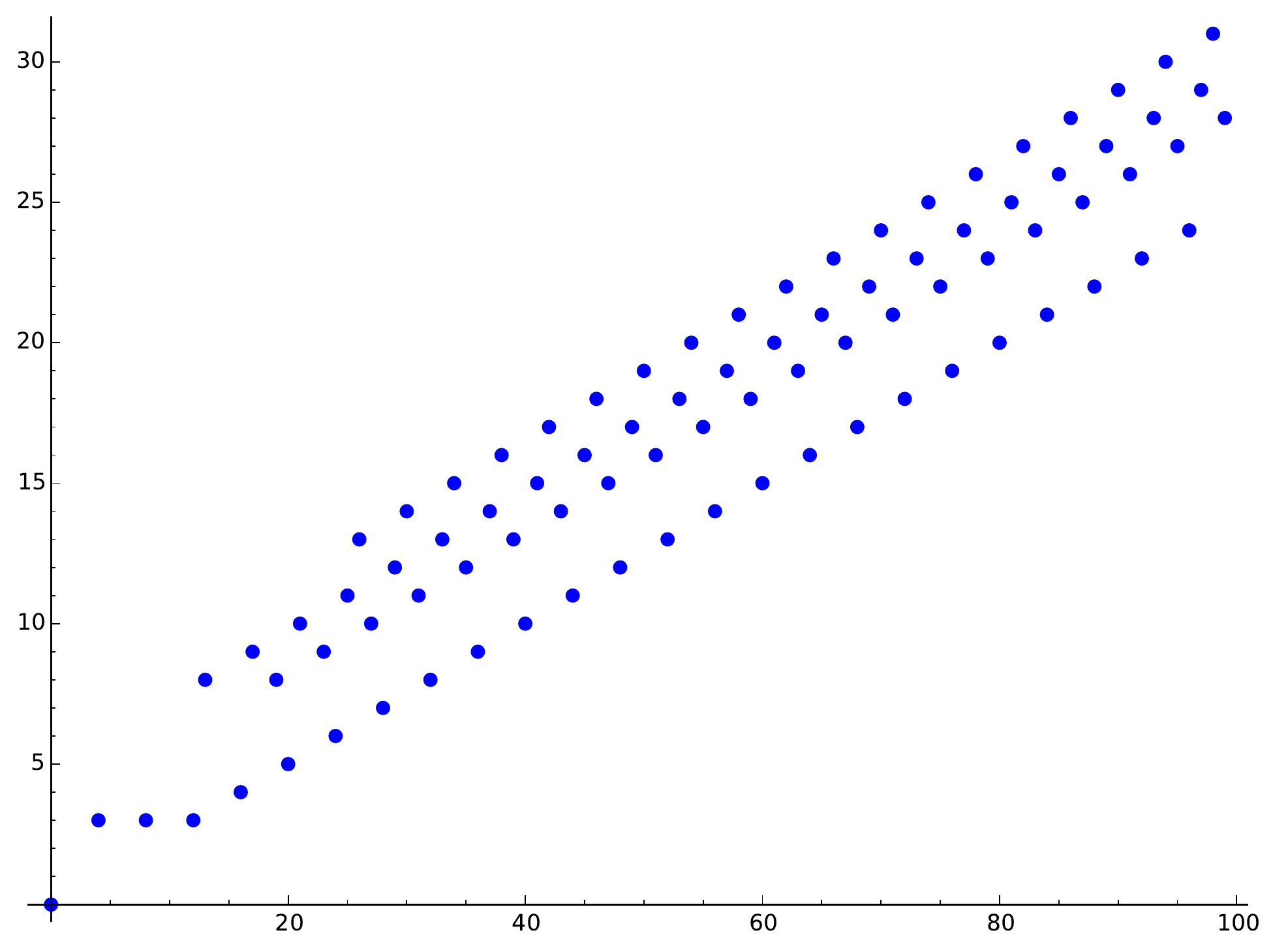}
\hspace{0.5in}
\includegraphics[width=2.5in]{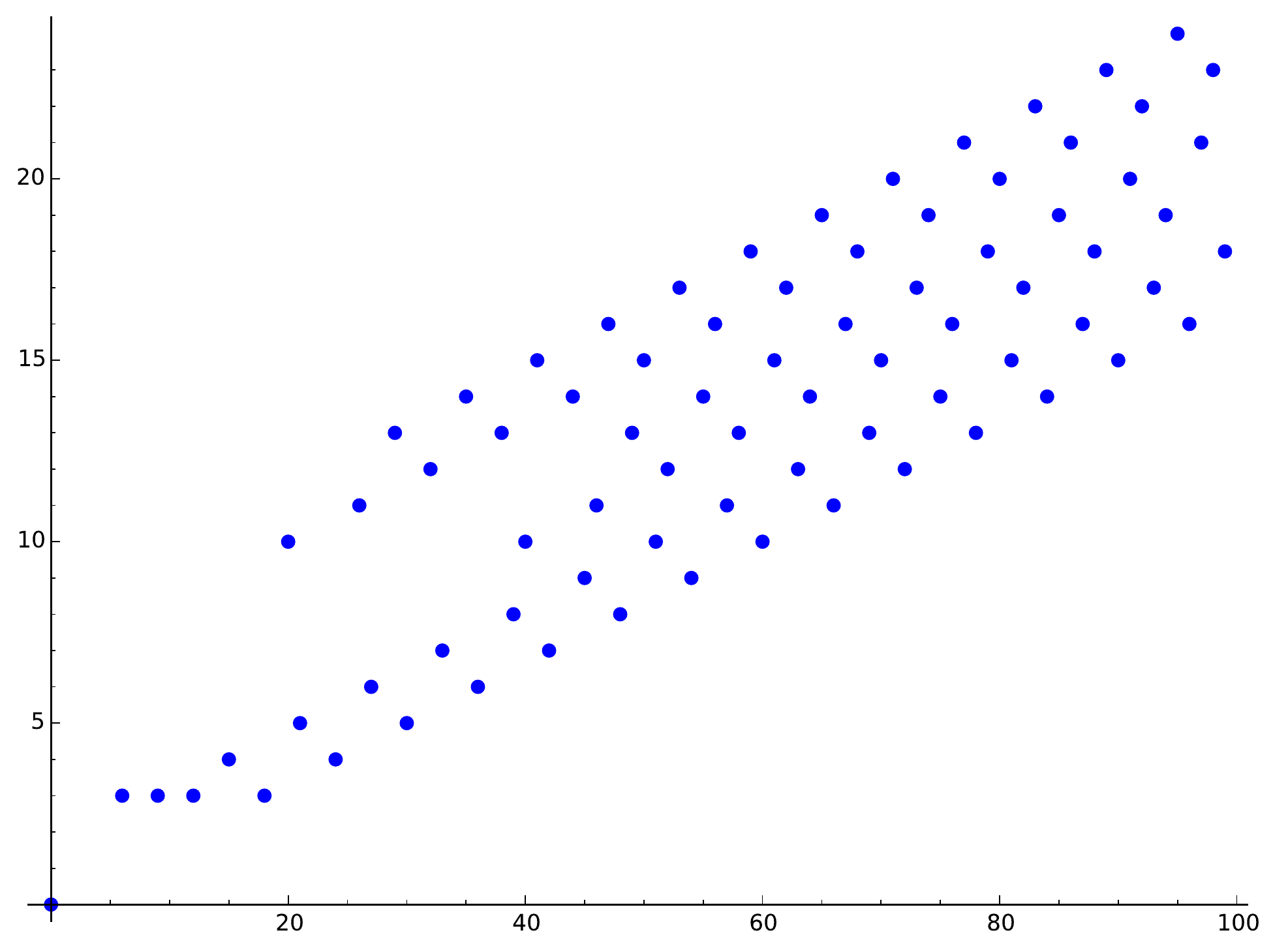}
\medskip
\caption{A plot showing the function $\omega_S$ for the numerical monoid $S = \<4,13,19\>$ (left) and $\<6,9,20\>$ (right), discussed in Example~\ref{e:omegaquasi}.}  
\label{f:omegaquasi}
\end{figure}

We now apply Theorem~\ref{t:omegafactor} to improve the bound on the dissonance point of the $\omega$-function.  As with the original bound on the dissonance point given in \cite{omegaquasi}, this result is heavily motivated by computational evidence, in this case provided by Algorithm~\ref{a:dynamicomega}.  

\begin{lemma}\label{l:newomegabound}
Fix a numerical monoid $S = \langle n_1, n_2, \ldots, n_k \rangle$.  For each $n \in S$, we have 
$$M(n) \le n/n_1 \le \omega(n),$$ where $M(n)$ is the maximum length of a factorization for $n$.
\end{lemma}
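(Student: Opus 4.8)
The plan is to prove the two inequalities $M(n) \le n/n_1$ and $n/n_1 \le \omega(n)$ separately, both using the fact that $n_1$ is the smallest irreducible of $S$.

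First I would handle $M(n) \le n/n_1$. Let $\aa = (a_1, \ldots, a_k) \in \mathsf Z(n)$ be a factorization of maximum length, so $|\aa| = M(n)$. Then
$$n = \sum_{i=1}^k a_i n_i \ge \sum_{i=1}^k a_i n_1 = n_1 |\aa| = n_1 M(n),$$
since $n_i \ge n_1$ for each $i$. Dividing by $n_1$ gives $M(n) \le n/n_1$. This direction is essentially immediate and requires only that $n_1$ is the least generator.

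Next I would establish $n/n_1 \le \omega(n)$. By Proposition~\ref{p:bulletext}, $\omega(n)$ is the supremum of lengths of bullets for $n$ (and for $n \in S$ this supremum is attained and finite by the standard theory). So it suffices to exhibit a bullet for $n$ of length at least $n/n_1$, or more precisely $\lceil n/n_1 \rceil$. The natural candidate is to take $r = \lceil n/n_1 \rceil$ copies of $n_1$; then $r n_1 \ge n$, so $r n_1 - n \in \NN$, and since $S$ is a numerical monoid we have $rn_1 - n \in S$ provided $rn_1 - n$ is large enough — but this need not hold for small $n$. The cleaner approach: start with the expression $u_1 + \cdots + u_r$ where all $u_i = n_1$ and $r = \lceil n/n_1 \rceil$; this expression is divisible by $n$ in the monoid sense only if $rn_1 - n \in S$. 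If it is not already a bullet, repeatedly add copies of $n_1$ until $rn_1 - n \in S$ (possible once $rn_1 - n > F(S)$), then prune: remove irreducibles one at a time as long as the divisibility $-x$-condition is preserved, i.e., apply the reduction in Lemma~\ref{l:subsume} in reverse to pass to an actual bullet. The key point is that each pruning step removes \emph{one} irreducible and decreases the value by at most $n_k$; I must argue that the resulting bullet still has length $\ge \lceil n/n_1\rceil$.

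I expect the main obstacle to be precisely this last step: controlling the length of the bullet obtained after pruning. A more robust argument avoids pruning altogether: take any bullet $b_1 u_1 + \cdots + b_k u_k = \bb \in \bul(n)$ (which exists since $\omega(n)$ is finite and positive for $n \in S$, as $S$ has no prime elements). Its value is $v = \sum b_i n_i$ with $v - n \in S$, and in particular $v \ge n$ since $v - n \in S \subset \NN$. Moreover $v = \sum b_i n_i \le n_k \sum b_i = n_k |\bb|$ gives only an upper bound on $v$, which is the wrong direction; instead I want a \emph{lower} bound on $|\bb|$ in terms of $v$. Here is where I use minimality of $n_1$ differently: $v = \sum b_i n_i \ge$... no — that also goes the wrong way. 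The correct move is: among all bullets of $n$, the one of maximal length $\omega(n)$ has value $v_{\max}$, and since a bullet cannot contain a copy of $n_1$ "too many times" before becoming reducible, one instead shows directly that the all-$n_1$'s expression of length $r=\lceil n/n_1\rceil$, after a single minimal reduction, still has length $\ge r - (\text{something bounded})$ — but to get exactly $n/n_1$ one should note $v - n \ge 0$ forces, for a maximal bullet, $\omega(n) n_1 \le \sum b_i n_1 \le \sum b_i n_i = v$ is again backwards. Given this delicacy, I would present the argument as: take the maximal bullet, note its value $v$ satisfies $v - n \in S$ hence $v \ge n$; separately, every bullet $\bb$ of $n$ satisfies $v = \sum b_i n_i$ and one shows $|\bb| \ge v/n_k$ is too weak, so instead one shows $\omega(n) \ge v/n_1$ is what must be proven — concluding that the honest route is the explicit construction of an all-$n_1$ bullet together with the observation that reducing it never drops the length below $\lceil n/n_1\rceil$ because each removed $n_1$ frees up exactly $n_1$ from the value while we started with slack at most $n_1 - 1$. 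I anticipate the referee-level care needed is in formalizing "reducing an all-$n_1$-expression of value $\ge n$ with minimal slack yields a bullet of length $\ge \lceil n/n_1 \rceil$," and that will be the crux of the written proof.
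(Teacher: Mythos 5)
Your proof of the first inequality $M(n) \le n/n_1$ is correct and complete (and in fact cleaner than what one needs: $n = \sum_i a_i n_i \ge n_1\sum_i a_i = n_1|\aa|$ suffices). The problem is the second inequality $n/n_1 \le \omega(n)$: you identify the right object --- a bullet consisting only of copies of $n_1$ --- but you never close the argument, and you say so yourself in your final sentence. The circular attempts in the second half (bounding $|\bb|$ by $v/n_k$, then noting it goes ``the wrong way,'' etc.) do not converge to a proof.

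The missing observation is that no pruning analysis and no ``slack'' bookkeeping are needed. Let $b_1$ be the \emph{least} positive integer with $b_1 n_1 - n \in S$; such a $b_1$ exists because $S$ is cofinite in $\NN$ (equivalently, numerical monoids are Archimedean), since $b_1 n_1 - n > F(S)$ for $b_1$ large. Then $\bb = b_1\ee_1$ is already a bullet for $n$: condition (i) of Definition~\ref{d:bulletext} holds by the choice of $b_1$, and condition (ii), which for a pure-$n_1$ expression is the single requirement $(b_1-1)n_1 - n \notin S$, holds by minimality of $b_1$. The length bound is then automatic and does not depend on where you started: since $b_1 n_1 - n \in S \subseteq \NN$, we have $b_1 n_1 \ge n$, hence $\omega(n) \ge |\bb| = b_1 \ge n/n_1$ by Proposition~\ref{p:bulletext}. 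This is exactly the paper's argument. Your worry that ``reducing an all-$n_1$ expression might drop the length below $\lceil n/n_1\rceil$'' dissolves once you notice that the lower bound on the length comes from the \emph{value} of the bullet you end with (which lies in $S \subseteq \NN$ after subtracting $n$, hence is at least $n$), not from the length of the expression you began with.
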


\begin{proof}
Fix $b_1 > 0$ such that $\bb = b_1 \ee_1 \in \bul(n)$ (note that $b_1$ exists since numerical monoids are Archimedian).  This gives $\omega(n) \ge |\bb| \ge n/n_1$.  
Additionally, we have $n = \sum_{j=1}^k a_jn_j \ge a_1n_1$ for each factorization $\aa \in \mathsf Z(n)$, meaning $M(n) \le n/n_1$.  
\end{proof}

\begin{thm}\label{t:newomegabound}
Fix a numerical monoid $S = \<n_1, \ldots, n_k\>$, and suppose $n > N_0$, where 
$$N_0 = \frac{F(S) + n_2}{n_2/n_1 - 1}.$$
Any maximal bullet $\bb \in \bul(n)$ satisfies $b_1 > 0$, and $\omega(n) = \omega(n - n_1) + 1$.  
\end{thm}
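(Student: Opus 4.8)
The plan is to use Theorem~\ref{t:omegafactor} to control the value $y$ attached to any bullet, and thereby force every maximal bullet $\bb$ to contain a copy of $n_1$. Fix $n > N_0$ and a maximal bullet $\bb \in \bul(n)$. By Theorem~\ref{t:omegafactor}, writing $A = \{u_i : b_i \ne 0\}$ and $y = \sum_j b_j n_j - n$, we have $y \in \bigcap_{n_i \in A} \Ap(S; n_i)$, so in particular $y$ lies in an Ap\'ery set $\Ap(S; n_i)$ for every $i$ with $b_i \ne 0$; since $y \in \Ap(S; n_i)$ forces $y - n_i \notin S$ and $\Ap$ sets of a numerical monoid are bounded, we get $y \le F(S) + n_i$ for each such $i$. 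The key observation is a length comparison: if $b_1 = 0$, then $\bb$ uses only generators $n_2, \ldots, n_k$, so $|\bb| = \sum_j b_j \le \tfrac{1}{n_2}\sum_j b_j n_j = \tfrac{1}{n_2}(y + n)$, whereas by Lemma~\ref{l:newomegabound} we know $\omega(n) \ge n/n_1$, and $\bb$ being a maximal (hence longest, by Proposition~\ref{p:bulletext}) bullet means $|\bb| = \omega(n) \ge n/n_1$. Combining $n/n_1 \le |\bb| \le \tfrac{1}{n_2}(y + n)$ with the bound $y \le F(S) + n_k$ (or more sharply, using the relevant $n_i$) yields $n/n_1 \le (F(S) + n_2 + n)/n_2$ after arranging the inequality; solving for $n$ gives $n \le \dfrac{F(S) + n_2}{n_2/n_1 - 1} = N_0$, contradicting $n > N_0$. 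Hence $b_1 > 0$ for every maximal bullet $\bb \in \bul(n)$.

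For the recursion $\omega(n) = \omega(n - n_1) + 1$, I would argue in two directions. For ``$\ge$'': take a maximal bullet $\bb \in \bul(n)$; since $b_1 > 0$, Theorem~\ref{t:bulletscovered} gives $\bb - \ee_1 \in \bul(n - n_1)$, and this bullet has length $|\bb| - 1 = \omega(n) - 1$, so $\omega(n - n_1) \ge \omega(n) - 1$. For ``$\le$'': take any bullet $\cc \in \bul(n - n_1)$ realizing $\omega(n - n_1)$, i.e.\ $|\cc| = \omega(n-n_1)$; apply the $n_1$-cover map $\psi^{n_1}$ from Corollary~\ref{c:bulletscovered} to land in $\bul(n)$, where the image has length either $|\cc|$ or $|\cc| + 1$. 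To upgrade this to the sharp statement I need to know the image actually has length $|\cc| + 1$ — equivalently, that the value $v = \sum_j c_j n_j$ of the chosen bullet satisfies $v - (n_1 + n) \notin S$; but since $\cc \in \bul(n - n_1)$ exactly says $v - (n - n_1) = v - n + n_1 \in S$ and $v - (n - n_1) - n_i \notin S$, I should instead choose $\cc$ to be a \emph{maximal} bullet and invoke the length-comparison argument above applied to $n - n_1$ (which also exceeds $N_0$ up to a harmless constant, or more carefully, I check the inequality still closes): a maximal bullet for $n - n_1$ also has $c_1 > 0$, and then $v - (n_1 + n) = v - (n - n_1) - n_1 = (\text{an element of }S) - n_1$, which need not be outside $S$ — so I would instead reason directly that $\omega(n - n_1) + 1 \le \omega(n)$ by taking the maximal bullet $\cc$ of $n - n_1$, noting $\cc + \ee_1 \in \bul(n)$ by Theorem~\ref{t:covermaps}(ii) (after verifying its hypothesis $v - (n_1 + n) \notin S$, which follows because otherwise $\cc$ would already be non-minimal for $n - n_1$ after removing the implicit copy of $n_1$, contradicting that $\cc$ is a bullet), giving a bullet of length $\omega(n-n_1) + 1$ in $\bul(n)$.

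The main obstacle I anticipate is the direction $\omega(n) \le \omega(n - n_1) + 1$, specifically verifying the hypothesis of Theorem~\ref{t:covermaps}(ii) for the lifted bullet — i.e., pinning down exactly why the cover map must \emph{add} a copy of $n_1$ rather than fix the bullet. The clean way around this, and the route I would actually write up, is to run the entire argument through maximal bullets: start from a maximal $\bb \in \bul(n)$ (so $b_1 > 0$ by the first part), set $\cc = \bb - \ee_1 \in \bul(n - n_1)$ via Theorem~\ref{t:bulletscovered}, giving $\omega(n - n_1) \ge \omega(n) - 1$; then observe $\psi^{n_1}(\cc)$ recovers a bullet of $\bul(n)$ of length $\le |\cc| + 1 = \omega(n)$, and since $\psi^{n_1}$ surjects onto the relevant part of $\bul(n)$ via Corollary~\ref{c:bulletscovered}, the maximal length in $\bul(n)$ is at most $\omega(n-n_1) + 1$. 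Care must be taken that the constant in the bound $y \le F(S) + n_i$ is matched to the $n_2$ appearing in $N_0$ — this is why the statement uses $n_2$ specifically: among generators actually appearing in the bullet, $n_2$ is the smallest one $> n_1$, and the worst case for the length bound $|\bb| \le \tfrac{1}{n_{i_0}}(y+n)$ is when $\bb$ uses only the generator $n_2$, so the estimate $|\bb| \le \tfrac{1}{n_2}(F(S) + n_2 + n)$ is exactly the threshold, and the algebra closes precisely at $N_0 = (F(S)+n_2)/(n_2/n_1 - 1)$.
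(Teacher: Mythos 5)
Your proof of the first assertion (every maximal bullet of $n$ has $b_1 > 0$) is correct and is essentially the paper's argument: Theorem~\ref{t:omegafactor} bounds the value $y$ attached to a bullet avoiding $n_1$ by $F(S) + \min(A)$, so such a bullet has length at most $(n + F(S) + n_2)/n_2$, which for $n > N_0$ is strictly less than $n/n_1 \le \omega(n)$ by Lemma~\ref{l:newomegabound}.

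The recursion $\omega(n) = \omega(n - n_1) + 1$ is only half proved. Writing $\aa = \bb - \ee_1$ for a maximal $\bb \in \bul(n)$, Theorem~\ref{t:bulletscovered} gives $\aa \in \bul(n - n_1)$ and hence $\omega(n - n_1) \ge |\aa| = \omega(n) - 1$; this is the inequality $\omega(n) \le \omega(n - n_1) + 1$ (your labels ``$\ge$'' and ``$\le$'' are swapped). The direction $\omega(n) \ge \omega(n - n_1) + 1$ is never established: your final ``clean way around'' ends with ``the maximal length in $\bul(n)$ is at most $\omega(n-n_1)+1$,'' which is the same upper bound on $\omega(n)$ restated, and your earlier attempt --- lifting a longest bullet $\mathbf{c} \in \bul(n - n_1)$ of value $v$ and claiming $v - n \notin S$ because otherwise $\mathbf{c}$ would fail to be a bullet --- breaks down when $c_1 = 0$, since the bullet condition only excludes $v - (n - n_1) - n_i \in S$ for generators actually present in $\mathbf{c}$; forcing $c_1 > 0$ via the first assertion would need $n - n_1 > N_0$, which is not assumed. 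The gap closes as follows: if $v - n \notin S$, Theorem~\ref{t:covermaps}(ii) gives $\mathbf{c} + \ee_1 \in \bul(n)$ of length $\omega(n - n_1) + 1$; if $v - n \in S$, Theorem~\ref{t:covermaps}(i) gives $\mathbf{c} \in \bul(n)$, and $\mathbf{c}$ cannot be maximal there (else $c_1 > 0$ by the first assertion, so $\mathbf{c} - \ee_1 \in \bul(n - n_1)$ by Theorem~\ref{t:bulletscovered}, and then $\mathbf{c} = n_1 + (\mathbf{c} - \ee_1)$ contradicts Lemma~\ref{l:subsume}), so $\omega(n) > \omega(n - n_1)$. (The paper is admittedly terse on this same point, citing Theorem~\ref{t:covermaps}(ii) without explaining why case~(ii) must occur, but your write-up as it stands produces no bullet of $n$ of length $\omega(n - n_1) + 1$.)
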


\begin{proof}
Fix a bullet $\bb \in \bul(n)$, and let $A = \{n_i : b_i > 0\}$.  First, suppose $b_1 = 0$.  By Theorem~\ref{t:omegafactor}, we have $\bb \in \mathsf Z(n + s)$ for some $s \in \bigcap_{n_i \in A} \Ap(S;n_i)$.  In particular, $s - \min(A) \notin S$, so $s - \min(A) \le F(S)$.  
Since $n > N_0$, we have 
$$|\bb| \le M_{\<A\>}(n + s) \le \frac{n+s}{\min(A)} \le \frac{n + F(S)}{\min(A)} + 1 \le \frac{n + F(S)}{n_2} + 1 < n/n_1 \le \omega_S(n).$$
by Lemma~\ref{l:newomegabound}, meaning $\bb$ is not maximal.  This proves the first statement.  

Now, suppose $\bb$ is maximal, and let $\aa = \bb - \ee_1$.  The above argument implies $b_1 > 0$, so by Theorem~\ref{t:bulletscovered}, we have $\aa \in \bul(n - n_1)$.  This gives
$$\omega(n) - 1 \ge \omega(n - n_1) \ge |\aa| = |\bb| - 1 = \omega(n) - 1,$$
where the first inequality follows from Theorem~\ref{t:covermaps}(ii).  
\end{proof}

\begin{remark}\label{r:newomegabound}
The bound given in \cite{omegaquasi} for the dissonance point of the $\omega$-function of a numerical monoid $S$, while explicit, is far too large to be feasable reached in computation.  In contrast, the new bound $N_0$ given in Theorem~\ref{t:newomegabound} can actually be reached computationally; that is, it is often possible to compute every $\omega$-value below $N_0$ (using Algorithm~\ref{a:dynamicomega}, for instance).   See Table~\ref{tb:newomegabound} for a detailed comparison of these bounds.  The $\omega$-value of any element above $N_0$ is determined by the $\omega$-values of the elements just below $N_0$, and can be obtained by simply evaluating a linear function.  This makes it possible to compute the $\omega$-value of any element of $S$. 
\end{remark}

\begin{table}
\begin{tabular}{|l|l|l|l|}
$S$ & Dissonance & Theorem~\ref{t:newomegabound} & $n_0$ \cite[Theorem~3.6]{omegaquasi} \\
\hline
$\<6,9,20\>$ & 12 & 104 & 37,800 \\
$\<10,12,15\>$ & 190 & 325 & 66,600 \\
$\<10,12,15,16,17\>$ & 10 & 175 & 34,272,000 \\
$\<10,12,13,14,15,16,17,18,19,21\>$ & 10 & 115 & 450,087,321,600 \\
$\<100,121,142,163,284\>$ & 100 & 25715 & 64,426,520,664,000 \\
$\<1001,1211,1421,1631,2841\>$ & 1001 & 357362 & $\approx 6 \cdot 10^{19}$ \\
\end{tabular}
\medskip
\caption{Comparison for the start of quasilinear behavior of the $\omega$-function.  The dissonance points were computed using Algorithm~\ref{a:dynamicomega}, available in the next release of the GAP package \texttt{numericalsgps} \cite{numericalsgps}.}
\label{tb:newomegabound}
\end{table}

\section{Future Work}\label{s:futurework}

It is natural to ask whether the dynamic algorithms given above can be extended to settings outside the realm of numerical monoids.  In fact, Algorithms~\ref{a:dynamicfactor} and~\ref{a:dynamiclengths} extend immediately to any finitely generated monoid $M$, once a suitably ``bounded'' subset of $M$ is chosen (for instance, one could use the set of divisors of some element $m \in M$).  In order to generalize Algorithm~\ref{a:dynamicdelta}, the eventual behavior of $\Delta:M \to 2^\ZZ$ must be examined.  

\begin{prob}\label{pr:deltafingen}
Describe the eventual behavior of $\Delta:M \to 2^\ZZ$ for $M$ finitely generated.  
\end{prob}

It is worth noting that the generalization of Algorithm~\ref{a:dynamiclengths} to finitely generated monoids does allow the use of computation packages for investigating Problem~\ref{pr:deltafingen}.  

Generalizing Algorithm~\ref{a:dynamicomega} presents a slightly more subtle issue.  Section~\ref{s:omegaext} extends the $\omega$-function on a finitely generated monoid $M$ to its quotient group $\mathsf q(M)$, along with both the ``base case'' and the ``inductive step'' for Algorithm~\ref{a:dynamicomega}.  However, given an element $m \in M$, the set of divisors of $m$ with nonzero $\omega$-value need not be finite; see Example~\ref{e:omegacone}.  Thus, more care must be taken in order to dynamically compute $\omega$-values in $M$, and it is as yet unclear how to make this distinction.  

\begin{example}\label{e:omegacone}
Consider the monoid $M \subset \NN^2$ consisting of all points not lying above the ray generated by $(2,1)$.  Definition~\ref{d:omegaext} extends the domain of $\omega_M$ to all of $\ZZ^2$, and Proposition~\ref{p:omegazero} states that $\omega_M(m) = 0$ precisely when $m \in -M$.  However, the set of divisors of any nonzero element $m \in M$ contains infinitely many elements of $\ZZ^2$ which necessarily have both nonzero $\omega$-value and nontrivial bullet sets; see Figure~\ref{f:omegacone}.  
\end{example}

\begin{figure}
\includegraphics[width=3in]{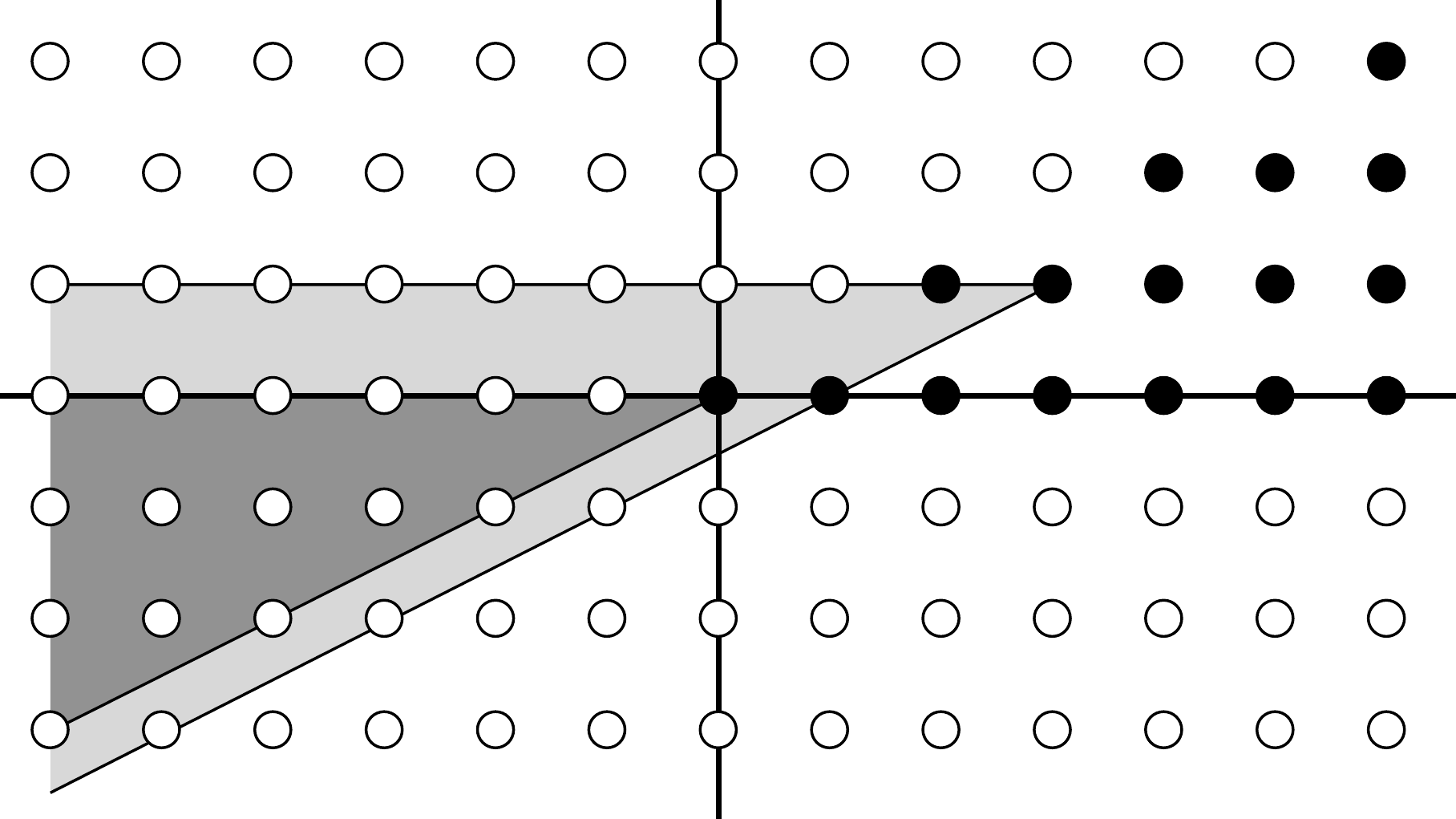}
\medskip
\caption{The monoid $M \subset \NN^2$ given in Example~\ref{e:omegacone}.  The elements of $M$ are marked by filled dots, the lightly shaded region marks the divisors of $m = (3,1)$, and the darkly region marks $-M$.}
\label{f:omegacone}
\end{figure}

\begin{prob}\label{pr:omegafingen}
Develop a dynamic programming algorithm to compute $\omega$-values for finitely generated monoids.  
\end{prob}

Given an element $n \in S$, its \emph{catenary degree} $c(n)$ can be computed from a graph with vertex set $\mathsf Z(n)$ \cite{catenarytame}.  Like delta sets, the catenary degree function $c:S \to \NN$ is periodic for large input values \cite{catenaryperiodic}, but no bound on the start of this periodic behavior is known, and the period is only known to divide the product of the generators of $S$.  The iterative nature of the periodicity proof given in \cite{catenaryperiodic} hints at the existence of a dynamic algorithm for computing catenary degrees in numerical monoids, allowing for the use of computation when investigating the eventual periodic behavior.  

\begin{prob}\label{pr:catenaryalg}
Find a dynamic programming algorithm to compute the catenary degrees of numerical monoid elements.  
\end{prob}


\end{document}